\DeclareMathOperator{\Card}{Card}
\let\mc=\mathcal
\newcommand{\R}{\ensuremath{\mathbb R}}
\newcommand{\Z}{\ensuremath{\mathbb Z}}
\newcommand{\N}{\ensuremath{\mathbb N}}
\newcommand{\zd}{\ensuremath{\mathbb Z_{12}}}
\newcommand{\zc}{\ensuremath{\mathbb Z_{c}}}
\newcommand{\fa}{{\mathcal F}_{A}}
\newcommand{\fb}{{\mathcal F}_{B}}
\newtheorem{Thm}{Theorem}
\newtheorem{Prop}{Proposition}
\newtheorem*{Rem}{Remark}
\newtheorem{Lemma}{Lemma}
\newcommand{\dessin}[4]{
\begin{figure}[h]
\centerline{\includegraphics[width =#2]{#1}}
\caption{#3}
\label{#4}
\end{figure}}
\title{About the number of generators of a musical scale.}
\begin{document}
\author{Emmanuel Amiot}\thanks{manu.amiot@free.fr}
\date{5/21/2009, CPGE, Perpignan, France }

\maketitle

{\em Abstract\\ A finite arithmetic sequence of real numbers has exactly two generators: the sets
$\{a, a+f, a+2f,\dots a+ (n-1)f = b\}$ and
$\{b, b-f, b-2f, \dots b-(n-1)f = a\}$ are identical. 
A different situation exists when dealing with arithmetic sequences modulo some integer $c$. 
The question arises in music theory, where it has long been acknowledged that certain musical scales are generated, i.e. are arithmetic sequences modulo the octave. It is easy to construct many scales whose number of generators is a given totient number. We prove in this paper that no other number of generators can arise, and a complete classification is given.\medskip
 
In other words, starting from musical scale theory, we answer the mathematical question of how many different arithmetic sequences in a cyclic group share the same support set.
Some extensions and generalizations are also provided, notably for arithmetic sequences of real numbers modulo 1.}\medskip

KEYWORDS: Musical scale, generated scale, generator, interval, interval vector, DFT, discrete fourier transform, arithmetic sequence, music theory, modular arithmetic, cyclic groups, irrational.\bigskip

\section*{Foreword}
In the cyclic group $\zc$, arithmetic sequences are not ordered as they are in $\Z$. For instance, the sequence (0 7 14 21 28 35 42) reduces modulo 12 to (0 7 2 9 4 11 6), whose support is (rearranged) $\{0,2,4,6,7,9,11\} \subset \zd$.
In music theory, when $\zc$ is taken as a model of an equal chromatic universe with
 $c$ pitch classes modulo the octave, an interval $f$ generates several scales, and conversely many musical scales can be constructed as the values of an arithmetic sequence with step $f$. In the example above,  the G major  scale is generated by $f=7$, using the convention where 0=C, 1=C$\sharp$,2=D\dots 11=B. Identification modulo the octave, i.e. modulo 12 arithmetic, equals the third note 0+7+7 to 14-12=2, i.e. D, and so on. All 12 major scales can be generated in that way. \medskip
 
 Musicians say that the major scale is generated by ascending fifths (mathematically meaning steps of $f=7$) or descending fourths (meaning $-f=5$ which is of course the same thing modulo 12).
For example, the C major scale is the set $\{0, 2, 4, 5, 7, 9, 11\}$ which can be produced as both sequences
(5 0 7 2 9 4 11) or (11 4 9 2 7 0 5).
Similarly, the whole-tone scale  $\{0, 2, 4, 6, 8, 10\}$ is generated by either $f=2$ or $12-f=10$, starting the sequence with any element of the scale.\medskip

It looks natural to infer that, as in $\Z$, such arithmetic sequences have exactly two possible (and opposite) generators. But such is not the case.
\begin{enumerate}
\item
The `almost full' scale, with $c-1$ notes,
is generated by any interval $f$ coprime with $c$: $f, 2f, \dots (d-1)f$ mod $c$, 
being $c-1$ different elements, builds up such a scale\footnote{ 
 This was mentioned to Norman Carey by Mark Wooldridge \cite{Carey}, chap. 3.}. 
 For $c=12$ for instance we have $\Phi(12) = 4$ different generators, $\Phi$ being {Euler}'s totient function.\footnote{ $\Phi(n)$ is usually defined as the number of generators of the cyclic group with $n$ elements. For $n=12$ these generators are 1,5,7 and 11.} Same thing for the full aggregate, i.e. $\zc$ itself.
 \item
 Another extreme case is the `one note scale', wherein any number can be viewed as a generator as the scale is reduced to its starting note.\footnote{We will leave aside the even wilder case of an empty scale with no note at all.}
 \item
When $f$ generates a subgroup of $\zc$, then $k f$ generates the same scale as $f$,
for any $k$ coprime\footnote{ This case was suggested by David Clampitt
in a private communication; it also appears in \cite{Quinn}.} with $c$, that is
to say the generators are all those elements of the group $(\zc, +)$ whose order is 
equal to some particular divisor of $c$.
Consider for instance a `whole-tone scale' in 14 tone equal temperament, i.e. a 7-element sequence generated by 2 modulo 14. It exhibits 6 generators, which are the elements of $\Z_{14}$ with order 7, namely the even numbers: the set $\{0,2,4,6,8,10,12\}$ is produced by either of the 6 following sequences
\begin{center}
(0 2 4 6 8 10 12), (0 4 8 12 2 6 10) , (0 6 12 4 10 2 8), (0 8 2 10 4 12 6),
 (0 10 6 2 12 8 4), (0 12 10 8 6 4 2).
\end{center}
Mathematically speaking, the subgroup of $\zc$ with $d$ elements where $d$ divides $c$, is generated by precisely $\Phi(d)$ intervals. 
Conversely, the subgroup of $\zc$ generated by $f$ is the (one and only)  cyclic subgroup with $c/\gcd(c,f))$ elements. 
\item
One last example: 
the `incomplete whole-note scale' in ten notes equal temperament $\{1, 3, 5, 7\}$ has 4 different 
generators, namely 2, 4, 6, 8 (with as many different starting points).
\end{enumerate}

The present paper studies all possible arithmetic sequences (aka generated scales) 
$\{a, a+f, a+2f, \dots a+(d-1)f\}$ in $\zc$. We will prove that the above examples cover all possible cases, and hence that the number of generators is always a `totient number', i.e. some $\Phi(n)$ where $\Phi$ is Euler's totient function.\footnote{ Sloane's integer sequence A000010.}
\begin{itemize}
\item 
When a generator $f$ is coprime with $c$, there are two generators only, except 
in the cases of the full and `almost full scale' (when $d=c$ or $d=c-1$ which admit $\Phi(c)$ generators.
\item 
When some generator of the scale is not coprime with $c$, it will be seen that the number 
of generators can be arbitrarily large. 
\item
A related result involving complementation will be stated. 
\item 
 Then we will endeavour to bring the question into a broader focus, considering partial periodicity and its  relationship to Discrete Fourier Transform.
\item
Lastly, for a generalization, some results on scales with non integer generators will be given.
\end{itemize}

\subsection*{Notations and conventions}\ 

Unless otherwise mentioned, computations take place in $\zc$, the cyclic group with
$c$ elements. 

$\Z$ (respectively $\N=\Z^+$) stands for the integers (respectively the non negative integers).

$a\mid b$ means that $a$ is a divisor of $b$ in the ring of integers.

Since this paper may be of interest to music theorists, the word `scale' is used, incorrectly but according to custom, for `pc-set', i.e. an unordered subset of \zc.

A `generated scale' is a subset of $\zc$ build from the values of
some finite arithmetic sequence (modulo $c$), e.g.
$A = \{a, a+f, a+2f \dots\}\subset \zc$. 

`ME set' stands for Maximally Even Set',
`WF' means `Well Formed', `DFT' is `Discrete Fourier Transform'.

$\Phi$ is Euler's totient function, i.e. $\Phi(n)$ is the number of integers smaller than $n$
and coprime with $n$.

We will say that $A\subset \zc$ is generated by $f$ if $A$ can be written as
$A = \{a, a+f, a+2f \dots \}$ for some suitable starting point $a\in A$. Notice that sets are denoted using curly brackets, while sequences are given between parenthesis.

\section{Results}
\subsection{The simpler case}
  For any generated scale \emph{where the generator $f$ is coprime
with $c$}, there are only two generators $f$ and $c-f$, except for the extreme cases mentioned 
in the foreword:

\begin{Thm}\label{2genCase}\ \\
   Let $1<d<c-1$; the scales $A=\{0,a,2a,\dots  (d-1)a\}$ and  $B=\{0, b, 2b,\dots  (d-1)b\}$
   with $d$ pitch classes, generated in $\zc$ by intervals $a,b$, where one at least of $a,b$ is {coprime with $c$}, cannot coincide up to translation, unless $a=b$ or $a+b=c$ (i.e. $b=-a \mod c$). In other words, $A$ admits only the two generators $a$ and $-a$.
   
  But when $d=c-1$ or $d=c$, then there are $\Phi(c)$ generators.
\end{Thm}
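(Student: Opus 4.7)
The plan is to reduce the statement, by a multiplication in $\zc$, to the special case where $A$ is a true interval of consecutive integers. Assuming WLOG that $a$ is coprime with $c$, I apply the bijection $x\mapsto a^{-1}x$ of $\zc$; since it commutes with translation and sends arithmetic sequences to arithmetic sequences, the claim becomes: if $A=\{0,1,\dots,d-1\}$ and $B=\{0,\beta,2\beta,\dots,(d-1)\beta\}$ with $\beta\equiv a^{-1}b\pmod c$ satisfy $A=B+t$ for some $t\in\zc$, then $\beta\equiv\pm 1\pmod c$.

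Next I would verify that $\gcd(\beta,c)=1$. If $g=\gcd(\beta,c)>1$, every element of $B$ lies in one residue class modulo $g$, hence so does every element of the translate $A$; but $A$ contains $0$ and $1$, which are incongruent mod $g$, a contradiction. This removes the asymmetry in the hypothesis.

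The heart of the argument is a multiset comparison. From $A=B+t$ we get the multiset identity $A-A=B-B$ in $\zc$, and since $B=\beta A$ with $\beta$ invertible we have $B-B=\beta(A-A)$. Hence multiplication by $\beta$ permutes the multiset $A-A$ preserving multiplicities. The multiplicity of $k\in\zc$ in $A-A$ counts representations $k\equiv i-j\pmod c$ with $i,j\in\{0,\dots,d-1\}$: each integer $n\in[-(d-1),d-1]$ contributes $d-|n|$. I claim that for $1<d<c-1$, multiplicity $d-1$ is attained precisely at $k\equiv\pm 1\pmod c$. Indeed, if $k$ has a unique representative $n\in[-(d-1),d-1]$, the multiplicity equals $d-|n|$, which reaches $d-1$ only when $|n|=1$; if $k$ admits two representatives $n$ and $n'=n-c$ (necessarily of opposite sign, with $|n|+|n'|=c$), the multiplicity is $2d-c$, strictly less than $d-1$ since $d\le c-2$. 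Applied to $k=1$, this forces $\beta\cdot 1=\beta$ to lie in $\{\pm 1\}\pmod c$.

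The main obstacle is the multiplicity bookkeeping across the whole range $1<d<c-1$: one must carefully handle the elements of $\zc$ that acquire two integer representatives when the window $[-(d-1),d-1]$ overshoots $c$, and it is precisely the inequality $d\le c-2$ that prevents the doubly-represented case from matching the multiplicity $d-1$ of $\pm 1$. The exceptional cases $d=c-1$ and $d=c$ are then verified directly: any $\beta$ coprime with $c$ yields $\{0,\beta,\dots,(c-1)\beta\}=\zc$, producing $A$ itself when $d=c$ and any prescribed $(c-1)$-element translate when $d=c-1$, while no $\beta$ with $\gcd(\beta,c)>1$ can produce $c-1$ or $c$ distinct multiples; hence there are exactly $\Phi(c)$ generators in each of these extreme cases.
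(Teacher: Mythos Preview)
Your proof is correct and follows essentially the same route as the paper: reduce via an invertible multiplication to the chromatic cluster $\{0,1,\dots,d-1\}$, then exploit that in its difference multiset the value $d-1$ is attained only at $\pm 1$, so the multiplier $\beta$ must be $\pm 1$. One minor improvement over the paper is that you verify $\gcd(\beta,c)=1$ directly (via the observation that $0,1\in A$ cannot lie in a single residue class), whereas the paper postpones this fact to a later DFT-based lemma.
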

In simpler words, this means that when some generating interval of the scale is coprime with $c$, then we are dealing with the simplest case, the case of the major scale which is generated by fifths or fourths exclusively -- except if the scale almost saturates $\zc$. 

\subsection{About polygons of $\zc$}
\begin{Thm}\label{polygon}
   A {\em regular polygon} in \zc, i.e. a translate of some subgroup $f\zc$ with $d$ elements,
   has exactly $\Phi(d)$ generators.
\end{Thm}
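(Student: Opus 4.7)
The plan is to reduce to the case of a subgroup containing $0$ and then identify the generators with the elements of order exactly $d$ in that subgroup.

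First I would observe that translating both the scale and the starting point by $-a$ does not change the set of generators $f$, so without loss of generality the polygon can be assumed to be the subgroup $H = (c/d)\zc = \{0, c/d, 2c/d, \dots, (d-1)c/d\}$ itself. Recall that for $d \mid c$, this is the unique subgroup of $\zc$ of order $d$.

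Next, I would characterize exactly which $f \in \zc$ are generators of $H$, i.e.\ for which there exists a starting point $a \in H$ with $H = \{a, a+f, a+2f, \dots, a+(d-1)f\}$. Two conditions are necessary and sufficient:
\begin{enumerate}
\item $f \in H$ (because $f = (a+f) - a$ is a difference of two elements of $H$, and $H$ is a subgroup);
\item the $d$ elements $a, a+f, \dots, a+(d-1)f$ are pairwise distinct, equivalently the order of $f$ in $\zc$ is at least $d$.
\end{enumerate}
Conversely, these two conditions are clearly sufficient: if $f \in H$ has order $\ge d$, then since every element of $H$ has order dividing $d$, $f$ in fact has order exactly $d$, so $\{a, a+f, \dots, a+(d-1)f\}$ is a coset of $\langle f\rangle$ of size $d$ contained in the coset $a + H = H$, hence equal to $H$.

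Finally I would count: writing $f = k \cdot c/d$ with $0 \le k < d$, the order of $f$ in $\zc$ is $c/\gcd(c, kc/d) = d/\gcd(d,k)$, which equals $d$ exactly when $\gcd(k,d)=1$. There are $\Phi(d)$ such values of $k$, giving $\Phi(d)$ generators. No step here looks difficult; the only mild subtlety is making explicit why a generator $f$ must itself lie in $H$, which follows immediately from the subgroup property once one notices that consecutive terms of the sequence differ by $f$.
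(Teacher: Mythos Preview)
Your argument is correct and, at its core, coincides with the paper's: both identify the generators of a regular polygon with the generators (in the group-theoretic sense) of the underlying cyclic subgroup of order $d$, whence the count $\Phi(d)$. The paper reaches this identification via the difference group $\langle A-A\rangle$ after a detour through two lemmas on interval vectors (characterizing unions of regular polygons by $V_A(f)=d$), whereas you go there directly by the elementary observation that $f$ must lie in $H$ and have order exactly $d$; your route is shorter and self-contained, while the paper's detour is there mainly to connect with the interval-vector machinery used elsewhere.
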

Of course, for any such generator, any point of the polygon can be used as starting point
for the generation of the scale.

 \dessin{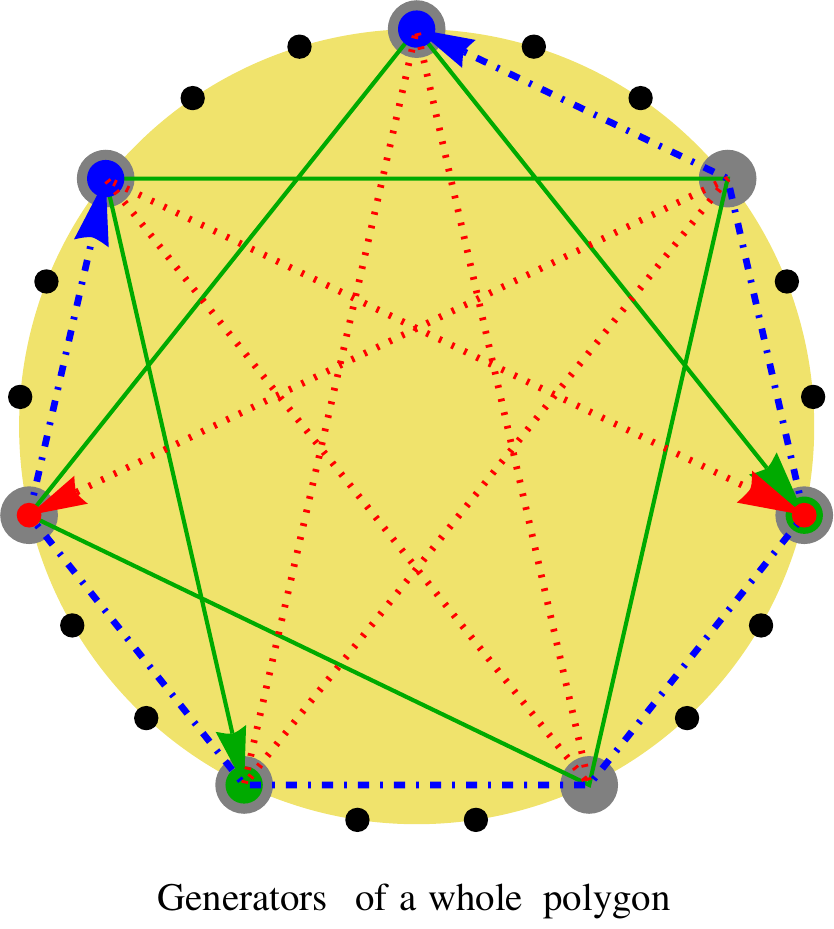}{6 cm}{Many generators for a regular polygon}{fullPoly}

Clearly in that case $f$ will not be coprime with $c$ (except in the extreme case of $d=c$ when the polygon is the whole aggregate $\zc$). The following theorem rounds up the classification of all possible cases:

\begin{Thm}\label{notCoprime}
   A scale generated by $f$ \textbf{not} coprime with $c$, with a cardinality
   $1<d<c$, has 
   \begin{itemize}   
      \item 
       one generator when the scale is (a translate of) $\{0, c/2\}$ (what musicians call a tritone);
      \item
     two generators (not coprime with $c$) when $d$ is strictly between 1 and $c'-1=c/m - 1$ where
    $m = \gcd(c, f)$;
        \item
    $\Phi(d)$ generators when $d = c'=c/m$, i.e. when $A$ is a regular polygon;
      \item
   $\Phi(d)$ generators when $d = c'-1$, and these generators share the same order in the group $(\zc,+)$;
   \end{itemize}
\end{Thm}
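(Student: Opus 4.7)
The plan is to reduce the non-coprime case to Theorem~\ref{2genCase} by restricting to the unique cyclic subgroup of $\zc$ that contains $A$. Set $m=\gcd(c,f)$, $c'=c/m$, and $H=m\zc$, which is the subgroup of $\zc$ of order $c'$; write $f=mf_0$, so that $\gcd(f_0,c')=1$. Since $f\in H$, every term $a+kf$ of the defining sequence lies in the coset $a+H$, hence $A$ is contained in a single coset of $H$, and after a translation one may assume $A\subset H$.

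The crux will be to show that any generator $g$ of $A$ must itself lie in $H$. This reduces to a one-line observation: if $A=\{a',a'+g,\ldots,a'+(d-1)g\}$, then $a'$ and $a'+g$ both belong to $A\subset H$, so $g\in H$. Writing $g=mg_0$, the canonical isomorphism $H\simeq\Z_{c'}$, $mk\mapsto k\pmod{c'}$, transports $A$ to a subset $A'\subset\Z_{c'}$ of cardinality $d$ and identifies the generators of $A$ in $\zc$ with the generators of $A'$ in $\Z_{c'}$. Since $A'$ is generated by $f_0$ with $\gcd(f_0,c')=1$, Theorem~\ref{2genCase} applies to $A'$ inside $\Z_{c'}$.

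From there it remains to translate each clause of Theorem~\ref{2genCase} back to $\zc$. When $1<d<c'-1$, the theorem supplies two generators $\pm f_0$ of $A'$, lifting to $\pm f\in\zc$, both non-coprime with $c$. When $d=c'$, one has $A'=\Z_{c'}$ and Theorem~\ref{polygon} (equivalently, the full-scale clause of Theorem~\ref{2genCase} in $\Z_{c'}$) delivers $\Phi(c')$ generators, giving the regular-polygon bullet. When $d=c'-1$, the almost-full clause of Theorem~\ref{2genCase} provides $\Phi(c')$ generators of $A'$, namely those $g_0$ with $\gcd(g_0,c')=1$; their lifts $mg_0$ in $\zc$ all have order $c/\gcd(c,mg_0)=c'$, confirming the common-order property claimed. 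Finally, the tritone $\{0,c/2\}$ fits into the same framework as the degenerate case $d=c'=2$, where $\Phi(2)=1$ correctly accounts for the unique generator $c/2=-c/2$.

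The only genuine obstacle is the coset-confinement step establishing that every generator of $A$ lies in $H$; as indicated it is immediate from the fact that $A$ itself sits in one coset of $H$. Once this bijection between the generators of $A$ and those of $A'$ is in place, Theorem~\ref{notCoprime} is just Theorem~\ref{2genCase} read across the isomorphism $H\simeq\Z_{c'}$, and no further computation is required.
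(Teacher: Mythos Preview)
Your argument is correct and is in fact more elementary than the paper's. Both proofs finish identically---divide by $m$ and invoke Theorem~\ref{2genCase} inside $\Z_{c'}$---but they reach that reduction differently. The paper proves that any two generators $f,g$ of $A$ satisfy $\gcd(c,f)=\gcd(c,g)$ (its Lemma~\ref{twoGen}) via the DFT: from $|\fa(t)|=|\sin(\pi d t f/c)|/|\sin(\pi t f/c)|$ one reads off that the set of $t$ where $|\fa(t)|$ attains its maximum value $d$ is exactly $\{t: c\mid tf\}$, which depends only on $\gcd(c,f)$; since this set is intrinsic to $A$, all generators share the same gcd with $c$. You bypass this entirely with the observation that $A$ lies in a single coset of $H=m\zc$, so any generator---being the difference of two consecutive terms of $A$---already lies in $H$; the correspondence of generators then falls out of the group isomorphism $H\simeq\Z_{c'}$. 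Your route buys simplicity (no trigonometry, just closure of $H$ under subtraction); the paper's route buys a standalone Lemma~\ref{twoGen} and ties the result into the DFT framework used elsewhere. Amusingly, the paper remarks that Lemma~\ref{twoGen} ``can also be reached algebraically, but it is not altogether trivial''; your coset argument shows it is essentially a one-liner.

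One minor note: in the $d=c'-1$ clause you correctly obtain $\Phi(c')$ generators, which is also what the paper's own proof produces (``$\Phi(c')=\Phi(d+1)$ generators'') and what its worked example confirms ($d=7$, $c'=8$, four generators). The theorem statement's ``$\Phi(d)$'' in that bullet appears to be a slip; your count is the right one.
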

As we can see on fig. \ref{4generators}, the scales in Thm. \ref{notCoprime} are 
\textsl{geometrically} not new, since they are enlarged versions of the previous cases
(those of Thm. 1) immersed  in larger chromatic universes.

The last, new case, features {\em incomplete regular polygons}, i.e. regular polygons 
with one point removed. For any divisor $c'$ of $ c$, let $f=c/c'$ and $d=c'-1$: 
$$\{f, 2f, \dots d\, f \pmod c\} \qquad\text{ where  } d\,f = c - f = -f$$
is the simplest representation of such a scale.
$\{1, 3, 5, 7\}$ in a ten-note universe, mentioned in the foreword, was one of them.
Fig. \ref{4generators} shows another one in $\Z_{16}$.

 \dessin{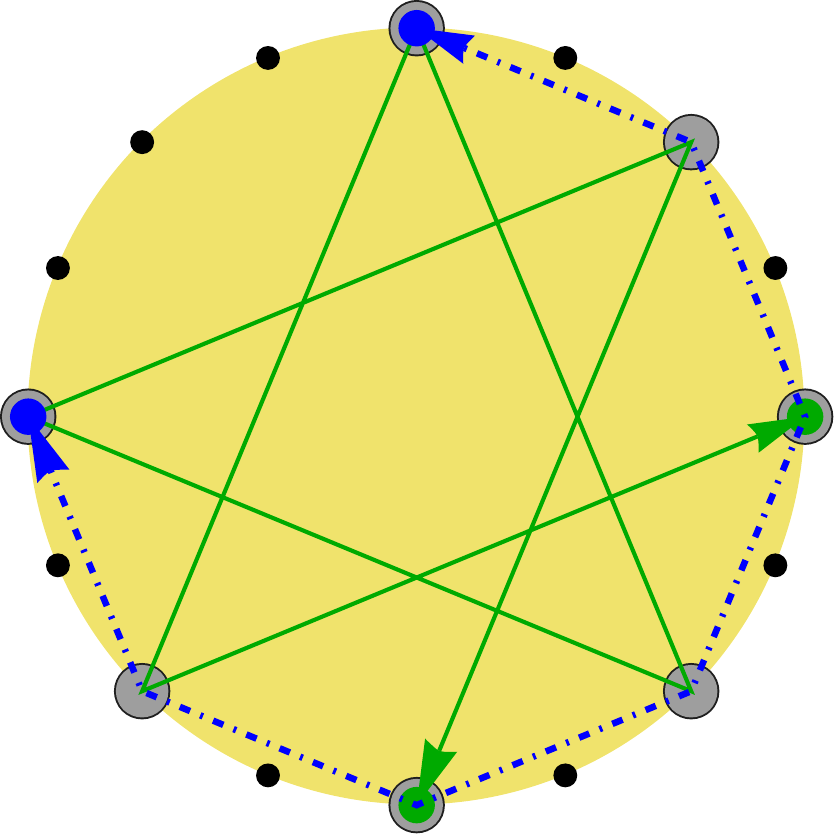}{5.5 cm}{Four generators for this scale}{4generators}

Consideration of the `gap' in such scales yields an amusing fact, 
apparent in this example and whose proof will be left to the reader:
\begin{Prop}
    For each different generator of an `incomplete polygon', there is a different starting point of the arithmetic sequence.
\end{Prop}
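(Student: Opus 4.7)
The plan is to show that an incomplete polygon is rigid enough that each admissible generator forces a unique starting point, via an explicit formula linking the two.

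First I would fix notation: let $A$ be the incomplete polygon, obtained from the full polygon $P=p+f\Z_c$ of cardinality $c'=c/\gcd(c,f)$ by deleting one vertex $q$, so that $|A|=d=c'-1$. Given any generator $g$ of $A$, the consecutive differences $g$ must stay inside $P$, so $g\in f\Z_c$; and since the sequence realises $c'-1$ distinct residues inside a subgroup of size $c'$, the element $g$ must have order exactly $c'$ in $\Z_c$. This matches Theorem~\ref{notCoprime}, which tells us all $\Phi(c'-1)$ generators lie in this single orbit.

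The key step is to identify the missing vertex. If $A=\{a,a+g,\dots,a+(c'-2)g\}$, then inside the cyclic subgroup $\langle g\rangle=P-p$, the one element of $P$ that is \emph{not} listed is $a+(c'-1)g$. Because $c'g\equiv 0\pmod c$, this simplifies to
\[
q \;=\; a+(c'-1)g \;\equiv\; a-g \pmod c,
\qquad\text{so}\qquad a \;=\; q+g.
\]
In other words, the starting point is determined by the generator through a translation by the fixed ``gap'' $q$.

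The proposition is then immediate: the map $g\mapsto a(g)=q+g$ is a translation of $\Z_c$, hence injective, so distinct generators give distinct starting points. I do not foresee any real obstacle: the only non-cosmetic point is the index computation showing that the unique missing vertex is $a-g$, which rests only on $g$ having full order $c'$ in the subgroup. Everything else is bookkeeping.
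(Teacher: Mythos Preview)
Your proof is correct and is exactly what the paper has in mind: the proposition is left to the reader with the hint ``consideration of the gap'', and your key identity $a=q+g$ is precisely that consideration. One incidental slip worth flagging: the number of generators of an incomplete $c'$-gon is $\Phi(c')$, not $\Phi(c'-1)$ --- these are the elements of order exactly $c'$ in $\zc$, and the paper's own worked example ($d=7$ in $\Z_{16}$ with four generators, $\Phi(8)=4$) confirms this; the statement of Theorem~\ref{notCoprime} itself carries the same typo --- but this count plays no role in your injectivity argument.
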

For instance, scale $\{0, 2, 4, 6, 8, 10, 12\}$ in $\Z_{16}$ has the $\Phi(8)=4$ generators
2, 6, 10, 14 with starting points 0, 4, 8, 12 respectively. See fig. \ref{4generators}, noticing that both
paths can be reversed.

Summing up, the number of generators of a scale can be any number in the set of images (or codomain) of Euler's totient function $\Phi$, which is made of 1 and most of the even integers.
Conversely, {\em nontotient numbers}, that is to say numbers that are not a $\Phi(n)$, can never be the number of generators of a scale. All odd numbers (apart from 1) are nontotient, the sequence of even nontotients begins with 14, 26, 34, 38, 50, 62, 68, 74, 76, 86, 90, 94, 98\dots \footnote{ Sloane's sequence A005277 in his online encyclopedy of integer sequences. For the whole sequence including odd numbers, see A007617.} So 14 is the smallest even number than can never be the number of different generators of a scale.

 Leaving aside the extreme cases of one-note scales and tritones, 
 the geometry of generated scales comes in three types:
\begin{itemize}
  \item 
  Regular polygons,
\item 
  Regular polygons minus one note,
  \item
  The seminal case: `major scale-like scales', i.e. scales with only two (opposite) generators.  
\end{itemize}
So this seminal case is by no means the only one. The three cases are summarized in picture \ref{3cases}.

 \dessin{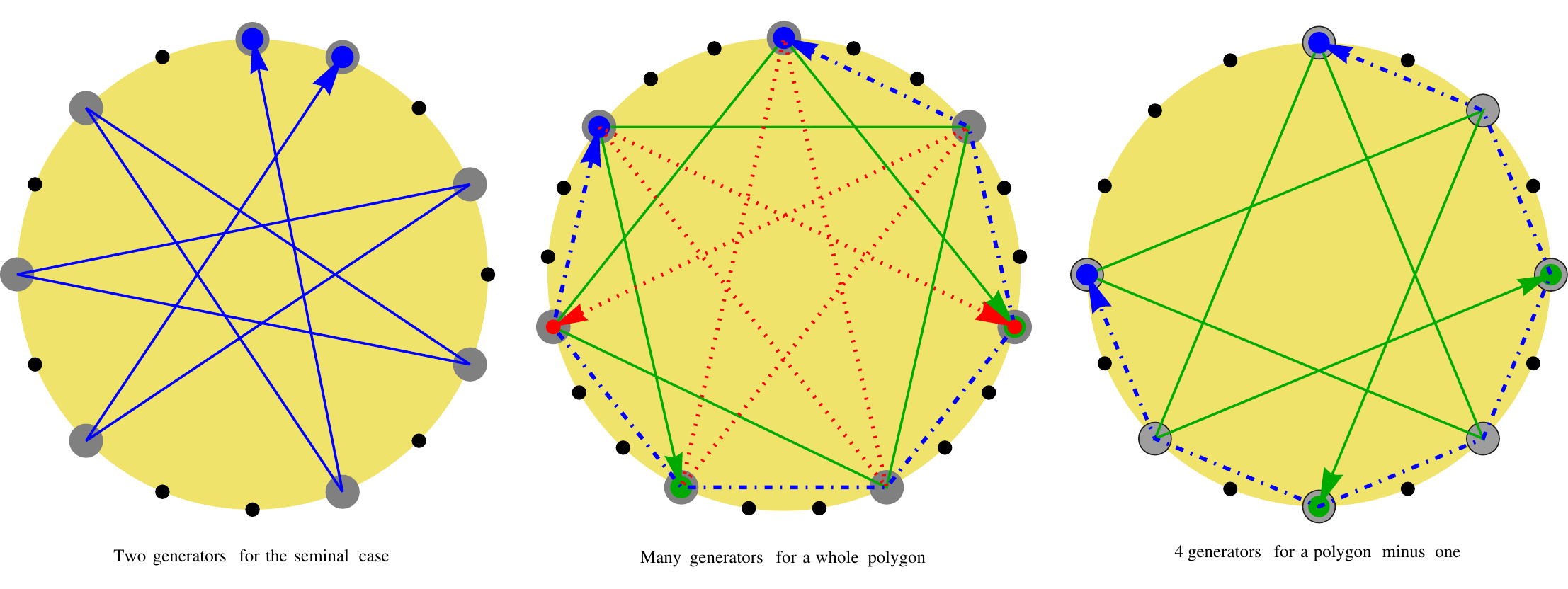}{17 cm}{Three cases}{3cases}

\subsection{Partial periodicities and Fourier Transform}
In the case of two generators, the values of the generators can be conveniently recognized as the indexes of the maximum Fourier coefficients of the scale. Informally, a generated scale $A$ is at least partially periodic, and this is apparent on the Fourier transform $\fa:t\mapsto \sum_{k\in A} e^{-2i\pi\, k\, t/c}$. 

More precisely, in the online supplementary of \cite{Amiot} we proved the following:
   \begin{Thm} \label{MEcoprime}
        For $c,d$ coprime, a scale with $d$ notes is generated by an interval $f$,
        \underline{coprime with $c$}, if and only if the semi-norm
        $$
          {\| \fa \|}^* = \max_{t \text{ coprime with }c} |\fa(t)|
          = \max_{t \text{ coprime with }c} \bigl|\sum_{k\in A} e^{-2i\pi\, k\, t/c}\bigr|
        $$
        is maximum among all $d$-element scales, i.e. for any scale $B$ with $d$ elements, 
         ${\| \fa \|}^* \ge  {\| \fb \|}^*$. Moreover, if 
        $  {\| \fa \|}^* =  |\fa(t_0)|$, then
        $t_{0}^{-1}\in\zc$ is one generator of scale $A$, the only other being $- t_0^{-1}$. 
   \end{Thm}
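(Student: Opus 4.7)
The plan is to reduce everything to the classical \emph{Huddling Lemma} for subsets of $\zc$: for any $d$-element set $S\subset\zc$ and any $t$ coprime with $c$, writing $\mc F_S(t) := \sum_{k\in S} e^{-2i\pi kt/c}$ as in the theorem, one has $|\mc F_S(t)|\leq K$, where $K = \sin(\pi d/c)/\sin(\pi/c)$, with equality if and only if $t\cdot S$ (the pointwise product in $\zc$) is an arc $\{a,a+1,\dots,a+d-1\}\bmod c$ --- equivalently, $S$ itself is an arithmetic sequence of common difference $t^{-1}$. Since $\gcd(t,c)=1$, the bijection $k\mapsto kt$ on $\zc$ reduces the question to bounding $|\sum_{j\in S'}\zeta^j|$ (with $\zeta = e^{-2i\pi/c}$) over $d$-subsets $S'$, and geometrically one expects the extremal configuration to be the most tightly packed one, namely a contiguous arc on the unit circle.

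Granting the lemma, the forward direction is mechanical. If $A = a_0 + f\{0,1,\dots,d-1\}$ with $\gcd(f,c)=1$, set $t_0 = f^{-1}\bmod c$; then $t_0\cdot A$ is a translate of the arc $\{0,1,\dots,d-1\}$, so $|\fa(t_0)| = K$. Since $K$ is the universal upper bound from the lemma, ${\|\fa\|}^* = K \geq {\|\fb\|}^*$ for every $d$-scale $B$.

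Conversely, the arc $\{0,1,\dots,d-1\}$ itself attains ${\|\cdot\|}^* = K$, so the maximum of the seminorm over $d$-element scales is exactly $K$. If $A$ realizes it, pick any $t_0$ coprime with $c$ with $|\fa(t_0)| = K$; the equality clause of the huddling lemma then forces $A$ to be an arithmetic sequence of common difference $t_0^{-1}$, which is coprime with $c$. That $-t_0^{-1}$ is the only other generator is then supplied by Theorem~\ref{2genCase} in the non-extreme regime $d<c-1$ (the hypothesis $\gcd(c,d)=1$ already excludes $d=c$, regular polygons, and their truncations).

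The load-bearing step --- the only one that is not mere bookkeeping --- is the \emph{rigidity} clause of the huddling lemma: that arcs are the \emph{unique} extremal configurations (up to translation). Choosing the global phase $\alpha$ so that $\mc F_S(t)=|\mc F_S(t)|e^{i\alpha}$, one has $|\mc F_S(t)| = \sum_{k\in S}\cos(2\pi kt/c+\alpha)$, and its maximum over $d$-element $S$ is attained precisely when $tS$ collects the $d$ indices $j\in\zc$ with the largest values of $\cos(2\pi j/c+\alpha)$. Those $d$ indices cluster around a single direction on the unit circle, i.e.\ form a contiguous arc; swapping any ``outside'' element of $tS$ with a ``missing inside'' one would strictly increase the sum, forcing $tS$ itself to be an arc.
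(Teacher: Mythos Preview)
The paper does not actually prove this theorem in the text: it states the result and defers the argument to the online supplementary of \cite{Amiot}, remarking only that the scales in question are ``affinely equivalent to some ME sets'' and that this extends Quinn's DFT characterisation of maximal evenness. So there is no in-paper proof to set yours against line by line. That said, your approach --- reduce via the bijection $k\mapsto tk$ to maximising $\bigl|\sum_{j\in T}\zeta^{\,j}\bigr|$ over $d$-element subsets of the $c$-th roots of unity, then identify the extremisers as contiguous arcs --- is precisely the ``affine image of a ME set'' idea the paper is gesturing at, and the forward direction matches the explicit trigonometric computation the paper does give (Lemma~\ref{trigo}). So in spirit your route is the intended one.

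Two small points deserve tightening. First, your rigidity step (``swapping any outside element with a missing inside one would \emph{strictly} increase the sum'') tacitly assumes no ties among the values $\cos(2\pi j/c+\alpha)$ at the boundary of the top-$d$ set. You should observe that the level sets of this function on $\zc$ are reflection pairs about a single axis, so every top-$d$ selection --- even with ties --- is still a contiguous arc; without this remark the uniqueness claim is not quite nailed down. Second, your parenthetical asserts that $\gcd(c,d)=1$ rules out the extreme cases, but $\gcd(c,c-1)=1$ always, so $d=c-1$ is \emph{not} excluded by the hypotheses. In that case $|\fa(t)|=1$ for every $t$ coprime with $c$, every such $t_0^{-1}$ is a generator, and the ``only other being $-t_0^{-1}$'' clause fails whenever $\Phi(c)>2$. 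This is arguably a defect in the theorem as stated rather than in your argument, but you should flag the exception instead of claiming Theorem~\ref{2genCase} covers it.
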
 
    This extends the discovery made by Ian Quinn \cite{Quinn}
   that $A$ is Maximally Even\footnote{ These sets were introduced by Clough and Myerson \cite{CM}, they can be seen as scales where the elements are as evenly spaced as possible on a number of given sites. See also \cite{CD, DK, Amiot}.} with $d$ elements if, and only if, $|\fa(d)|$ has maximum value among $d$ elements subsets; and for good reason, since the scales involved in the last theorem are affinely equivalent to some ME sets.
 It is also another illustration of his philosophy of looking at scales through their DFT. 
   We have elaborated on this in \cite{Amiot}, Online Supplementary III, wherein
   this theorem is proved, along with more complicated cases.\footnote{ Copies of the proof for scholarly purposes can be obtained from the author.}

\subsection{Generatedness and complementation}\ 

There is a rather suprising sequel to these theorems, already published in the case of
Maximally Even Sets \cite{Amiot}:
\begin{Thm}[Chopin's theorem for generated scales]\ 

\label{Chopin}
   If two generated scales $A,B$ are complementary, then some translate of one of them is 
   a subscale of the other. In particular, they have a common generator.
\end{Thm}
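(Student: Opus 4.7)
The approach is a case analysis driven by the classification of generated scales summarized just before Figure~\ref{3cases}: every generated scale is either (i)~a regular polygon, (ii)~a regular polygon minus one note, (iii)~a ``major-scale-like'' scale with exactly two coprime generators $\pm f$, or an extreme one (one-note or almost-full). The crucial first step is: \emph{if $A$ has a generator $f$ coprime with $c$}, then multiplication by $f^{-1}$ is an automorphism of $(\zc,+)$ that maps generated scales to generated scales and sends $A$, after translation, to the plain integer arc $\{0,1,\dots,d-1\}$. Hence $B$ is sent to the complementary arc $\{d,\dots,c-1\}$, so $B$ is itself generated by $f$. The common generator is then $f$, and the shorter of the two arcs, translated in the original coordinates by $\pm df$, sits inside the longer one. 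This step also disposes of the extreme cases, each of which admits a coprime generator.

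\emph{Step~2: neither scale has a coprime generator.} Then, leaving the extreme cases aside, both $A$ and $B$ fall in class (i) or (ii), with $1<|A|,|B|<c-1$. A polygon of order $d$ forces $d\mid c$, and a polygon-minus-one of order $d$ forces $(d+1)\mid c$. In the sub-case \emph{(polygon, polygon)}, the relations $d\mid c$ and $(c-d)\mid c$ combine to give $c=2d$; then $A$ and $B$ are the two cosets of the unique subgroup $H\le\zc$ of index~$2$, so $B=A+1$ up to labelling, and they share all $\Phi(c/2)$ generators of $H$. In the sub-case \emph{(polygon, polygon-minus-one)}, from $d\mid c$ and $(c-d+1)\mid c$ one deduces $(c-d+1)\mid(d-1)$, hence $c<2d$; combined with $d\mid c$ and $d<c$, no non-extreme $d$ is admissible. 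Finally, in \emph{(polygon-minus-one, polygon-minus-one)}, setting $c_A:=d+1$ and $c_B:=c-d+1$ (both dividing $c$, with $c_A+c_B=c+2$) yields $\gcd(c_A,c_B)\mid 2$, after which the bound $\mathrm{lcm}(c_A,c_B)\le c=c_A+c_B-2$ forces $c_A=2$ or $c_B=2$, again extreme. The only configuration surviving in Step~2 is therefore the index-$2$ coset one, where the conclusion is immediate.

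\emph{Main obstacle.} The conceptual content lies in Step~1: the multiplicative re-ordering trick turns every scale generated by a coprime interval into a plain integer arc, making complementation transparent. The combinatorial work in Step~2 is elementary, but its sharp point is the observation $\gcd(c_A,c_B)\mid 2$ in the polygon-minus-one configuration, which together with the lcm bound collapses every remaining possibility to the trivial index-$2$ coset one. Once both steps are in place, the ``common generator'' and ``translate of one is a subscale of the other'' statements follow directly from the explicit description of each surviving configuration.
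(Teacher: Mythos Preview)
Your Step~1 matches the paper's argument exactly: when one of $A,B$ has a generator $f$ coprime with $c$, writing $\zc=\{f,2f,\dots,cf\}$ (equivalently, multiplying by $f^{-1}$) turns $A$ into an integer arc and $B$ into the complementary arc, so both are generated by $f$ and the shorter translates into the longer.

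Your Step~2, however, rests on a misreading of the classification. You assert that type~(iii) consists of scales ``with exactly two \emph{coprime} generators $\pm f$'', and then conclude that once coprime generators are excluded, both $A$ and $B$ must be polygons or polygons-minus-one. But Theorem~\ref{notCoprime}, case~2, explicitly produces scales with exactly two generators that are \emph{not} coprime with $c$ --- for instance $\{0,2,4\}\subset\zd$, generated only by $\pm2$ --- and these are neither polygons nor polygons-minus-one. Your three sub-cases (polygon/polygon, polygon/polygon-minus-one, polygon-minus-one/polygon-minus-one) therefore do not exhaust the possibilities; the pairings involving such ``major-scale-like but non-coprime'' scales are missing.

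The paper closes Step~2 with a one-line argument that also makes your divisor computations unnecessary: if \emph{every} generator of $A$ fails to be coprime with $c$, then $A$ lies (up to translation) in a proper subgroup of $\zc$, so $|A|\le c/2$; likewise for $B$. Since $|A|+|B|=c$, both cardinalities equal $c/2$, which forces $A$ and $B$ to be the two cosets of $2\zc$; then $B=A+1$ and $2$ is a common generator. Your arithmetic in the three sub-cases is correct where it applies, and indeed this same cardinality bound is what would rule out the cases you omitted --- but once you have it, the whole case split collapses.
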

This stands for non trivial scales, that is to say when both the scale and its complement contain at least two notes. 
An example is (C D F G), i.e. $\{0,2,5,7\}$, and its eight notes complement in twelve-tone universe, see fig. \ref{complement}.

\dessin{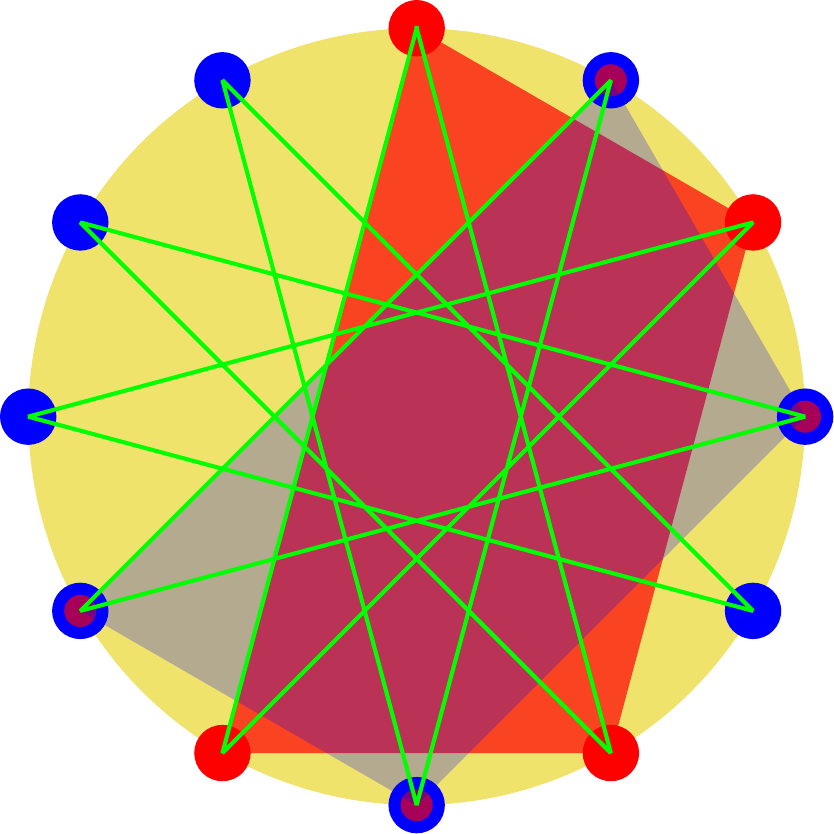}{7cm}{A generated scale and its generated complement have the same generators.}{complement}

\begin{Rem}\ 
   We named this Chopin's theorem, as it generalizes the construction used by Chopin in
opus 10, N. 5 etude in G flat major, where the right hand plays only black keys 
(e.g. a pentatonic scale, which is a Maximally Even Set\footnote{ See definition in the next paragraph.} generated by fifths) and the
left hand plays in several (mostly) major scales\footnote{ Three
major scales include the five black keys.} (another ME Set
generated by fifths), each of which includes the black keys, see fig. \ref{chopin}. 
In that situation\footnote{
 And even in a more general setting, with the subsets that Ian Quinn calls `prototypes',
which form  a class invariant by complementation \cite{Quinn}.} it has
certainly been observed before \cite{Amiot}, but as a statement on
general generated scales it is new, as far as we know, though it could
easily be derived from the Maximally Even case.
\end{Rem}

\dessin{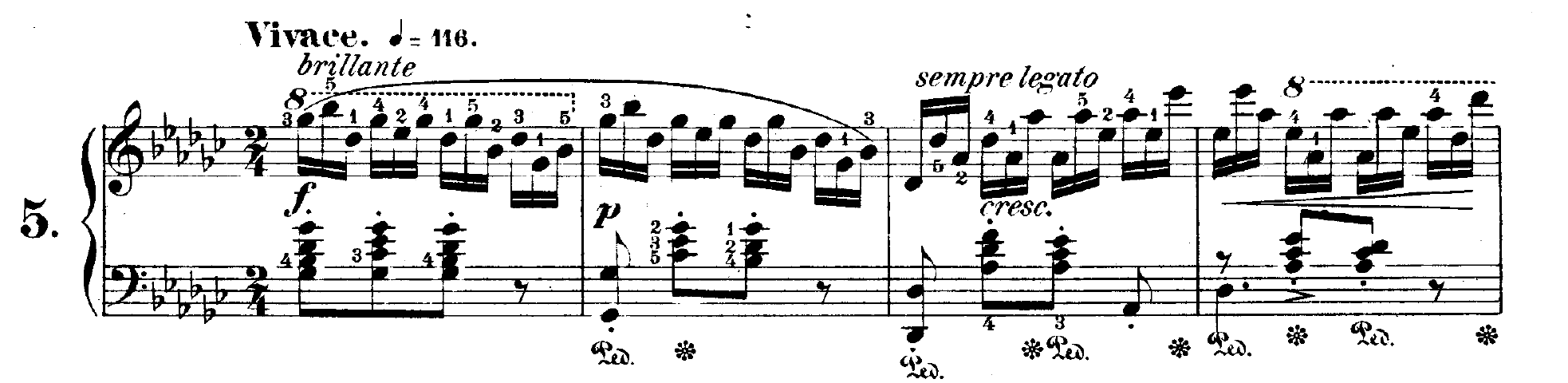}{15cm}{Chopin, Etude N. 5 opus 10.}{chopin}

\subsection{Non integer generation}
The present paper covered exhaustively the case of scales of the form
$$\{a + k\,f \pmod c, k=0\dots d-1\};$$ 
but in other music-theoretic models, generation from non integer steps is also common.
There are for instance the $J-$functions (see \cite{CD,CM, DK}):
$$
   J_\alpha(k) = \lfloor k\alpha \rfloor\pmod c,
   \quad\text{wherein usually } \alpha=\dfrac{c}d
$$
The symbol $\lfloor t\rfloor$ denotes the floor function, i.e. the greatest integer
lower than, or equal to $t$. Let us call $J_\alpha-$sets the sets of the form $\{J_\alpha(0); \dots J_\alpha(d-1)\}\subset \Z_c$.\footnote{Some degree of generalization is possible, see \cite{CD} for instance, but results merely in translations of the set.}
When $\alpha=c/d$, the $J_\alpha-$set is a maximally even set.
\medskip

Equally important are the $\mc P_x-$sets, made up with consecutive values of the maps $k\mapsto \mc P_x(k)= k\,x \pmod 1$, that express pythagorean-style scales (e.g. $x = \log_2(3/2)$) like the Well-Formed Scales \cite{Carey, CC}.  The question of different generators can be formulated thus:

\emph{If two sets of values of $J_\alpha, J_\beta$ (resp. $\mc P_x$, $\mc P_y$) 
are transpositionally equivalent, do we have necessarily $\alpha=\pm \beta$ (resp. $x=\pm y$)? Otherwise said, do such scales have exactly 2 generators and no more?}

We have already seen that the general answer is \emph{no}, for instance in Thm. \ref{polygon} when
$\alpha\in\N, d\,\alpha=c$ and the $J_\alpha-$set is a regular polygon (or similarly $d\,x\in\N$ for the $\mc P_x$ case).
Other cases are worth investigating.

Let us first consider the values of a $J$ function with a random multiplier, e.g. 
$J_{\alpha}(k)= \lfloor k\,\alpha \rfloor \pmod c$ with some $\alpha\in\mathbb R$. These values have been mostly
scrutinized when $\alpha = c/d$, for the generation of  ME-sets with $d$ elements in $\zc$.
\begin{Thm} \label{rationalCase}
A $J_{\alpha}-$set (up to translation) 
  does not characterize the pair $\pm\alpha$: there are infinitely many $\alpha$'s that
  give the same sequence.
\end{Thm}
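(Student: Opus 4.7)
The plan is to exploit the piecewise-constant character of the floor function. Define
\[
  \Psi : \R \longrightarrow (\zc)^d, \qquad \Psi(\alpha) = \bigl(\lfloor k\alpha \rfloor \bmod c\bigr)_{0 \le k \le d-1},
\]
which records the entire $J_\alpha$-sequence as a function of the parameter $\alpha$. The goal is to show that $\Psi$ is locally constant outside a small exceptional set, so that every value of $\Psi$ is attained on an entire interval, which immediately yields uncountably many $\alpha$'s producing the same $J_\alpha$-sequence and a fortiori the same $J_\alpha$-set.

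The ``breakpoints'' where $\Psi$ can possibly change are precisely the $\alpha$'s such that $k\alpha \in \Z$ for some $1 \le k \le d-1$, that is $\alpha \in \bigcup_{k=1}^{d-1} \tfrac{1}{k}\Z$, a countable discrete subset of $\R$. For any chosen $\alpha_0\in\R$, I would pick $\varepsilon>0$ small enough that $(\alpha_0,\alpha_0+\varepsilon)$ contains no breakpoint; on that interval each coordinate $\lfloor k\alpha \rfloor$ is a constant integer, hence $\Psi$ is constant there. The continuum of $\alpha$'s in $(\alpha_0,\alpha_0+\varepsilon)$ therefore all share the same $J_\alpha$-sequence, hence the same set, without even needing to invoke translational equivalence.

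There is essentially no obstacle here: the argument reduces to a one-line observation about the resolution of the floor function. The real \emph{content} of the theorem is cautionary -- unlike the genuine arithmetic sequences $\{a+kf\}$ in $\zc$ studied in the earlier theorems, the parameter $\alpha$ of a $J$-function is never pinned down by the set it produces, because small jitter in $\alpha$ is invisible to the floor. This is the backdrop against which any positive rigidity statement for $J_\alpha$-sets (e.g.\ restricting $\alpha$ to rationals with specified denominator) would have to be formulated.
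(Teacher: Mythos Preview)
Your argument is correct and is essentially the paper's own: both observe that the sequence $(\lfloor k\alpha\rfloor)_{0\le k\le d-1}$ is unchanged when $\alpha$ is increased by any sufficiently small amount, exploiting the right-continuity of the floor function. The paper states this in a single line (accompanied by a picture of the locally constant maps $\alpha\mapsto J_\alpha(k)$); your version makes the breakpoint set $\bigcup_{k=1}^{d-1}\tfrac{1}{k}\Z$ and the map $\Psi$ explicit, but the idea is identical.
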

Secondly, we state a result when the generator $x$, in a finite sequence of values of $\mc P_x$, is irrational:
\begin{Thm}\label{irrational}
  If the sets $\mc P_x^d =\{0, x, 2x, \dots (d-1)x\} \pmod 1$ and 
  $\mc P_y^d =\{0, y, 2y, \dots (d-1)y\} \pmod 1$
  are transpositionally equivalent, with $x$ irrational and $d>0$, then $x=\pm y\pmod 1$.
\end{Thm}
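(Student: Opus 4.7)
The plan is to extract, from the transpositional equivalence, a $\mathbb{Z}$-linear relation between $x$ and $y$, and then use the irrationality of $x$ together with the boundedness of the index set to pin down the coefficient. Write the hypothesis as the set equality
\[ \{k y \pmod{1} : 0 \le k < d\} = \{t + \ell x \pmod{1} : 0 \le \ell < d\} \]
for some common translation $t \in \mathbb{R}/\mathbb{Z}$. Since $x$ is irrational, the $d$ elements on the right are pairwise distinct, so the left-hand side also has cardinality $d$ and there exists a permutation $\sigma$ of $\{0,\dots,d-1\}$ with $k y \equiv t + \sigma(k)\,x \pmod{1}$ for every $k$. Applying this at $k=0$ eliminates $t$ and yields the clean form
\[ k y \equiv n_k\, x \pmod{1}, \qquad n_k := \sigma(k) - \sigma(0), \]
where the $n_k$ are $d$ distinct integers with $n_0 = 0$ and $|n_k| \le d-1$.

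The second step is to determine the shape of the map $k \mapsto n_k$. From the $k=1$ case we have $y \equiv n_1 x \pmod{1}$; substituting into the general relation gives $(k n_1 - n_k)\, x \in \mathbb{Z}$, and irrationality of $x$ forces $n_k = k\, n_1$ for every $k = 0,\dots,d-1$. Thus, up to a constant shift by $\sigma(0)$, the permutation $\sigma$ is simply multiplication by the single integer $n_1$.

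The conclusion then comes from a range bound. Assuming $d \ge 2$ (the case $d=1$ is vacuous since both sets reduce to $\{0\}$ regardless of $x$ and $y$, and the theorem should be read with that implicit restriction), the value $n_{d-1} = (d-1)\, n_1$ must still lie in $[-(d-1), d-1]$, forcing $|n_1| \le 1$; injectivity of $\sigma$ rules out $n_1 = 0$. Hence $n_1 = \pm 1$ and $y \equiv \pm x \pmod{1}$. I do not anticipate a genuine obstacle: once the matching permutation $\sigma$ is extracted, the argument reduces to an elementary integer-linear calculation, and the only role of irrationality is the cancellation step that turns the mod~$1$ congruences into exact integer equalities between coefficients.
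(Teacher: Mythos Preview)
Your argument is correct. The only delicate point is the $d=1$ degeneracy, which you flag explicitly; for $d\ge 2$ every step goes through: the permutation $\sigma$ exists because the irrationality of $x$ forces $\Card(\mc P_x^d)=d$, the cancellation $(k n_1 - n_k)x\in\Z \Rightarrow n_k = k n_1$ is exactly where irrationality is spent, and the range bound $|n_{d-1}|\le d-1$ together with injectivity finishes it.

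The paper proceeds differently. Rather than fixing a matching permutation and reading off a linear relation, it computes the full (oriented) interval vector of $\mc P_x^d$, i.e.\ the multiset of differences $(i-j)x\pmod 1$, and observes that the only nonzero intervals occurring with the maximal multiplicity $d-1$ are $x$ and $-x$; since transposition preserves the interval vector, the same two intervals with multiplicity $d-1$ must be $y$ and $-y$, whence $x\equiv\pm y$. This mirrors the proof of Theorem~\ref{2genCase} and keeps the paper's narrative unified around the interval-vector invariant. Your route is shorter and more self-contained: it bypasses the multiplicity count entirely, needing only the single relation $y\equiv n_1 x$ and a one-line range estimate. The trade-off is that the paper's method makes the structural parallel with the $\Z_c$ case visible, while yours shows more transparently that nothing beyond ``irrational implies $\Z$-linearly independent from $1$'' is really being used.
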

This has been implicitely known in the case of Well-Formed Scales in non tempered universes \cite{Carey}, but this theorem is more general.

Lastly, we characterize the generators of the `infinite scales' 
$\mc P_x^\infty = \{ n x\pmod1, n\in\Z\}$:
\begin{Thm} \label{infinite}
   Two infinite generated scales are equal up to translation, i.e. $\exists \tau, \mc P_x^\infty = \tau + \mc P_y^\infty$, if and only if they have the same generator up to a sign, i.e. $x =\pm y\pmod 1$, when  $x$ is irrational. In the case where $x$ is rational and $x=a/b$, there are $\Phi(b)$ different possible generators.
\end{Thm}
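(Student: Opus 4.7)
The plan is to reduce ``equality up to translation'' to \emph{equality as subgroups of $\R/\Z$}, and then split into the two cases. Observe that $\mc P_x^\infty$ is the cyclic subgroup of $(\R/\Z,+)$ generated by $x\bmod 1$: it contains $0$, and is closed under addition and negation because the index $n$ ranges over all of $\Z$. Hence if $\mc P_x^\infty = \tau + \mc P_y^\infty$, evaluating at $0\in\mc P_x^\infty$ shows $-\tau\in\mc P_y^\infty$; translating the subgroup $\mc P_y^\infty$ by one of its elements leaves it invariant, so $\tau + \mc P_y^\infty = \mc P_y^\infty$ and the hypothesis becomes the set equality $\mc P_x^\infty = \mc P_y^\infty$. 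The converse is immediate (take $\tau=0$), so throughout I may work with this subgroup equality.

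In the \emph{irrational case}, $\mc P_x^\infty = \mc P_y^\infty$ forces $y\in\mc P_x^\infty$ and $x\in\mc P_y^\infty$, so there are integers $k,\ell$ with $y\equiv k\,x$ and $x\equiv \ell\,y\pmod 1$. Substituting yields $(k\ell-1)x\in\Z$, and since $x$ is irrational this forces $k\ell = 1$, i.e.\ $k=\ell=\pm 1$ and $y\equiv \pm x\pmod 1$. The reverse implication is trivial since $\{nx:n\in\Z\}=\{-nx:n\in\Z\}$.

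In the \emph{rational case} $x=a/b$ with $\gcd(a,b)=1$, Bézout gives $\mc P_x^\infty = \langle 1/b\rangle = \{0,1/b,\dots,(b-1)/b\}$, the unique subgroup of $\R/\Z$ of order $b$. So $\mc P_y^\infty = \mc P_x^\infty$ iff $y\bmod 1$ generates this cyclic group of order $b$, i.e.\ $y\equiv k/b\pmod 1$ with $\gcd(k,b)=1$; there are exactly $\Phi(b)$ such residue classes. The only mildly delicate step is the translation reduction at the start; once that is done, the result is essentially the standard fact that a cyclic group of order $b$ has $\Phi(b)$ generators. One just has to be careful that generators are counted modulo $1$, which matches the convention used throughout the paper for the finite case.
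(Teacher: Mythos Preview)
Your proof is correct and follows essentially the same route as the paper's: reduce the translation to a subgroup equality, then handle the irrational case via $y\equiv kx$, $x\equiv \ell y$ and the irrationality of $x$ forcing $k\ell=1$, and the rational case via B\'ezout identifying $\mc P_{a/b}^\infty$ with $\langle 1/b\rangle$ and counting its $\Phi(b)$ generators. The only cosmetic difference is the translation reduction: the paper recovers the subgroup as the difference set $\mc S-\mc S$ of the translated scale, whereas you use that $0\in\mc P_x^\infty$ forces $\tau\in\mc P_y^\infty$ so the coset collapses; both arguments are equivalent and equally short.
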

In other words, an infinite pythagorean scale has 2 or $\Phi(b)$ generators, according to whether the number of actually different notes is infinite or finite. The special (tritone) case of one generator already mentioned in Thm. \ref{notCoprime} also occurs for $x=1/2$.

\section{Proofs}
\subsection{Proof of Thm. \ref{2genCase}}
The following proof relies on the one crucial concept of (oriented) interval vector,\footnote{ A famous concept in music theory, see for instance \cite{Rahn}.} 
that is to say the multiplicities of all intervals inside a given scale. 
This is best seen by transforming $A, B$ into segments of the chromatic scale, by
way of affine transformations.
\begin{proof}
All computations are to be understood modulo $c$. The extreme cases $d=c-1$  and $d=c$ were
discussed before: any $f$ coprime with $c$ generates the whole $\zc$, hence
$\{f, 2f, \dots  (d-1)f\}$ is always equal to $\zc$ deprived of 0. Up to a change of starting point, we can generate with such an $f$ any subset with cardinality $d-1$.

Furthermore, a generator $f$ \textbf{not} coprime with $c$ would only generate 
(starting with 0, without loss of generality) a part of 
the strict subgroup  $f\zc\subset \zc$, hence a subset with stricly less than $c-1$ elements.

So we are left with the general case, $1<d<c-1$. Without loss of generality we take $b$ invertible modulo $c$ ($a$ and $b$ are interchangeable, and we assumed that one of them is invertible modulo $c$, i.e. coprime with $c$).\smallskip

Let $D=\{0, 1, 2, \dots  d-1\}$; assume $A=B+\tau$, as $A=a D$ and $B=b D$,  
then $D$ must be its own image -- $\varphi(D)=D$ -- under the following affine map: 
$$
\varphi:x\mapsto  b^{-1}(a\,x-\tau)  = b^{-1}a x -b^{-1}\tau = \lambda x + \mu.
$$
We now elucidate the different possible multiplicities of intervals between two elements of $D$.
\begin{Lemma}
   Let $1<d<c-1$, we define
   the oriented interval vector $V_D$ by $V_D(k)= \Card\{(x,y)\in D^2 ,  y-x = k\}$.
   Then  $V_{D}(k)<d-1\ \forall k=2\dots c-2$; more precisely,
   $$
     V_D = [V_D(0), V_D(1), V_D(2),\dots  V_D(c-1)] = [d, d-1,d-2,\dots, d-2,d-1]
   $$
   i.e. interval 1 and its opposite $c-1$ are the only ones with multiplicity $d-1$.
\end{Lemma}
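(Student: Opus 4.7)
The quantity $V_D(k) = \Card\{(x,y)\in D^2 : y - x \equiv k \pmod c\}$ is an explicit combinatorial count, so the plan is to evaluate it directly by lifting the congruence $y-x \equiv k \pmod c$ to an equality in $\Z$. No appeal to earlier results is needed.

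For $x, y \in D = \{0, 1, \dots, d-1\}$, the integer difference $y-x$ lies in $\{-(d-1), \dots, d-1\}$, which since $d < c-1 < c$ is strictly contained in $(-c, c)$. Consequently, for $k \in \{1, \dots, c-1\}$ the congruence $y - x \equiv k \pmod c$ forces exactly one of the two equalities $y - x = k$ or $y - x = k - c$. The first is feasible only when $1 \le k \le d-1$, contributing $d-k$ ordered pairs, while the second is feasible only when $c-d+1 \le k \le c-1$, contributing $d - (c-k)$ pairs. Writing $(t)_+ := \max(0, t)$, this yields the closed form
$$V_D(k) = (d-k)_+ + (d - c + k)_+ \quad \text{for } 1 \le k \le c-1, \qquad V_D(0) = d.$$

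\textbf{Extracting the claimed bounds.} Plugging in $k=1$ and $k=c-1$ immediately gives $V_D(1) = V_D(c-1) = d - 1$ (the second summand vanishes at $k=1$ because $c-d+1 \ge 2$, and symmetrically the first vanishes at $k=c-1$; both facts use $d \le c-2$). For the remaining range $k \in \{2, \dots, c-2\}$ I would split by whether the two contributing intervals of $k$-values are disjoint or overlap. If $2d \le c+1$ they are disjoint, so only one summand is ever non-zero, and on the relevant sub-ranges it is at most $d-2$. If $2d \ge c+2$ the intervals overlap on $\{c-d+1, \dots, d-1\}$, and there both summands contribute, giving exactly $(d-k) + (d - c + k) = 2d - c$; outside the overlap the previous bound $\le d - 2$ still applies.

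\textbf{Main obstacle.} There is essentially no difficulty: the argument is elementary combinatorial accounting, and the only care required is handling the wrap-around cleanly, which is why I lift to $\Z$ at the outset. The pedagogical point worth underlining is that the strict inequality $V_D(k) < d-1$ on the middle range uses exactly the hypothesis $d \le c - 2$: indeed $2d - c < d - 1$ is equivalent to $d < c - 1$, and precisely when $d = c-1$ the formula collapses to $V_D(k) = d-1$ for every $k \ne 0$, which is why the `almost-full' scale behaves exceptionally in Theorem~\ref{2genCase}.
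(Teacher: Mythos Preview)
Your argument is correct and follows essentially the same route as the paper: both proofs lift the congruence $y-x\equiv k\pmod c$ to the two possible integer equalities $y-x=k$ and $y-x=k-c$, count each contribution separately, and then bound the resulting piecewise formula case by case. Your packaging via $(d-k)_+ + (d-c+k)_+$ is a bit tidier than the paper's three-case split, and your closing remark that $2d-c<d-1\iff d<c-1$ pinpoints exactly where the hypothesis is used, but the underlying idea is identical.
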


For instance, with $c=12, d=8, k=5$ one computes
$V_D=[8, 7, 6, 5, 4, 4, 4, 4, 4, 5, 6, 7]$.
See a picture of such an interval vector of a chromatic cluster $D$ (fig. \ref{iv}).

\dessin{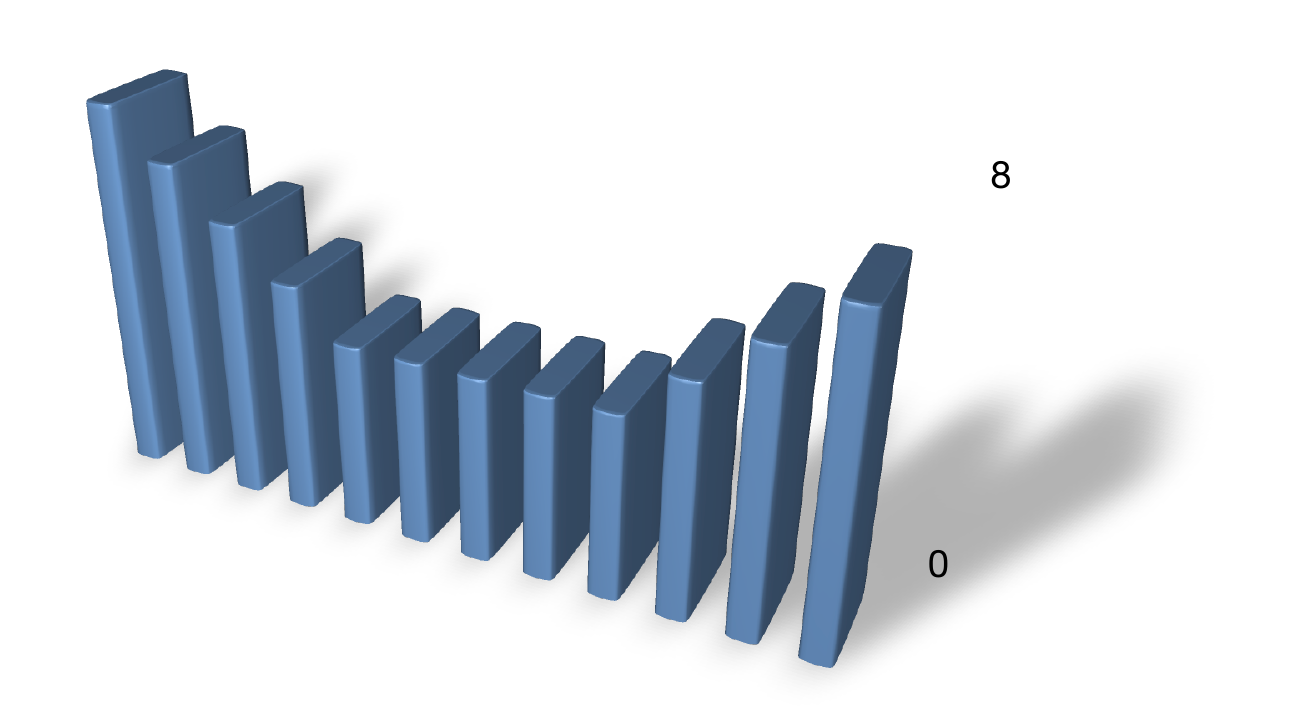}{10 cm}{Interval vector of (0 1 2 3 4 5 6 7) mod 12.}{iv}

\begin{proof}
Indeed, if we single out an interval $1<k\le c/2$ (this is general enough, owing to 
the obvious symmetry $V_D(c-k) = V_D(k)$),
the pairs $(i,j)$ of elements of $D$ which span exactly this interval $k$ come in two kinds (see fig.
\ref{caption}): either $i<j=i+k$ or the reverse, in which latter case the interval is in fact $j+c-i$, with
$j=i+k-c$:
$$
  (1,k+1) \dots  (d-k,d) \quad\text{when $k<d$, and, when $d+k>c$,}\quad
  (c+1-k,1) \dots  (d, d+k-c) ,
$$
which add up to
$\begin{cases}
     d-k & \text{pairs  for } k\le \inf(d,c-d)\\
     (d-k) + (d-c+k) = d-(c-d) & \text{pairs  for } c-d<k\le d\\
     d-c+k          & \text{pairs  for } k \ge \max(d, c-d)
\end{cases}.$ 

In all three cases, the multiplicity is $<d-1$, since $k, c-k$ and $c-d$ are all $>1$ by assumption.
\end{proof}

\dessin{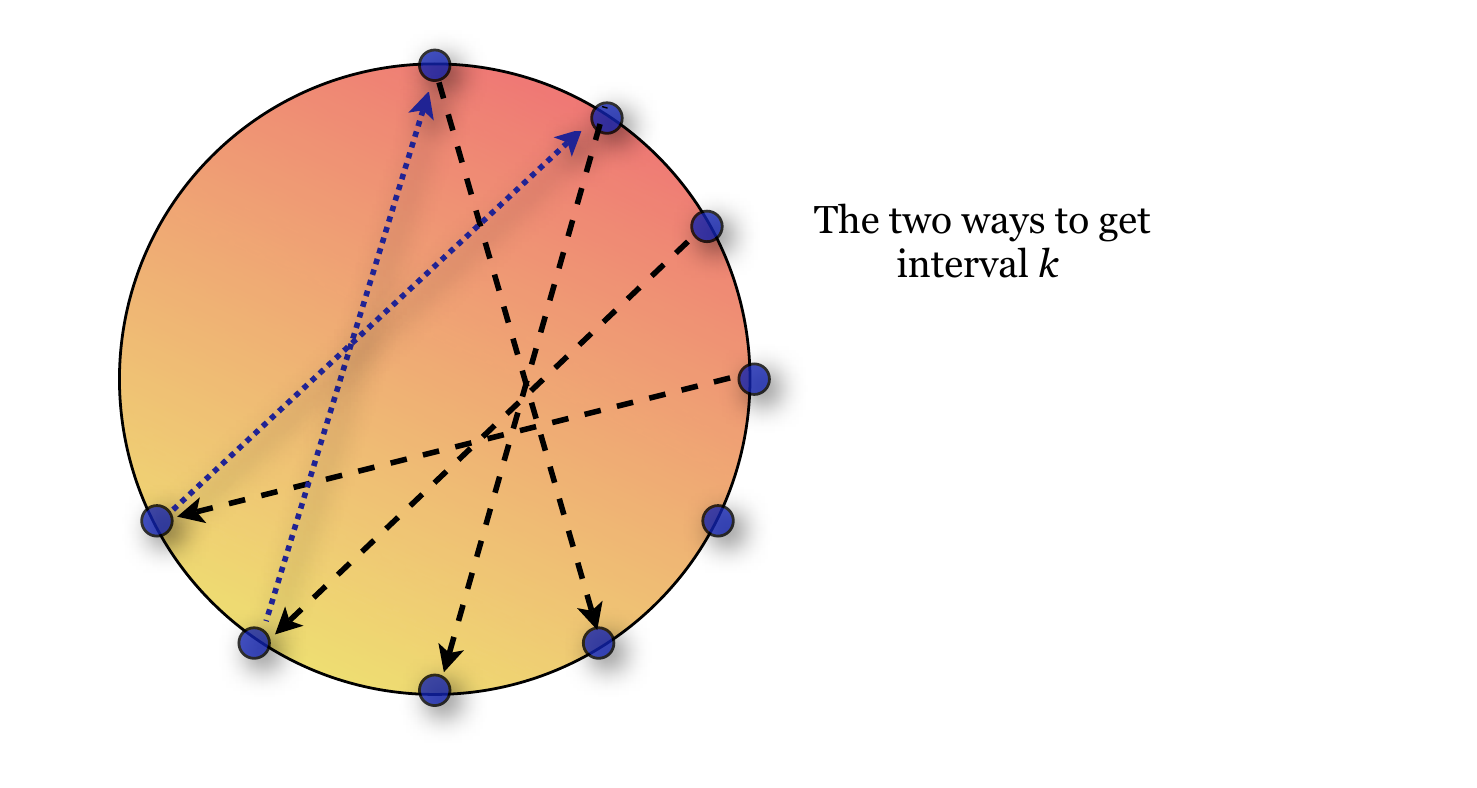}{12cm}{Double origin of one interval.}{caption}

As we will see independently below, if a generator is invertible so are all others.\footnote{ This could also be proved directly from $\varphi(D)=D$.} Hence 
$\lambda = b^{-1} a$ is invertible in $\zc$ and the map $\varphi:x\mapsto \lambda x + \mu$ above is one to one; it multiplies all intervals by $\lambda \mod c$:
$$
   \varphi(j) - \varphi(i) = (\lambda j + \mu) - (\lambda i + \mu) =\lambda. (j-i),
$$ 
which turns the interval vector 
$V_D$ into  $V_{\varphi(D)}$ wherein $V_D(\lambda i) = V_{\varphi(D)}(i)$: 
the same multiplicities occur, but for different intervals. 
This is a well known feature of affine transformations, that they permute the interval vectors.

Most notably, the {\em only}\footnote{ Because $\varphi$ is one to one.} 
two intervals with multiplicity $d-1$ in $V_{\varphi(D)}$ are $\lambda$ and $-\lambda$.
Hence, if $V_{\varphi(D)}=V_D$, the maximal multiplicity $d-1$ must appear in positions 1 
and $c-1$, which compels $\lambda$ to be equal to $\pm 1$.
Finally, as $\lambda=a\,b^{-1}\mod c$, we have indeed proved that $a=\pm b$, qed.
\end{proof}

\subsection{Proof of Thm. \ref{polygon}}

Now that we abandon the condition $\gcd(c,d)=1$, the affine maps $t\mapsto a\,t$,
for $a$ not coprime with $c$, are no longer one to one, so it is not clear how to
get back to chromatic clusters like in the proof of Thm. 1.

Let us begin with an interesting generalization of the reasoning in that proof. 
It generalizes a theorem about Well-Formed Scales in \cite{Carey}, and it applies to many musically pertinent objects, like the octatonic scale $\{0, 1, 3, 4, 6, 7, 9, 10\}$:
\begin{Lemma}
  A scale $A$ with $d$ notes is a reunion of regular polygons, each
   generated by the same $f$, if and only if  $V_A(f) = d$.
\end{Lemma}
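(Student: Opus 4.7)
The plan is to recognize that $V_A(f) = d$ is simply a reformulation of the property ``$A$ is invariant under translation by $f$'', and then to identify the subsets of $\zc$ with that property as unions of cosets of the cyclic subgroup $\langle f\rangle$.

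The key observation is that $V_A(f) = \Card\{(x,y)\in A^2 : y-x=f\} = \Card\{x\in A : x+f\in A\}$, since each $x\in A$ contributes at most one pair. Because $|A|=d$, the equality $V_A(f)=d$ is therefore equivalent to the assertion that $x+f\in A$ for every $x\in A$, i.e.\ $A+f\subseteq A$; and since translation by $f$ is injective on the finite set $A$, this is equivalent to $A+f=A$.

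From here both implications are short. For the ``$\Leftarrow$'' direction, any regular polygon generated by $f$ is, by Thm.~\ref{polygon}, a translate $x_0+\langle f\rangle$ of the subgroup $\langle f\rangle\subset\zc$, hence invariant under $+f$; a (necessarily disjoint) union of such polygons is therefore also invariant under $+f$, so $V_A(f)=d$. For the ``$\Rightarrow$'' direction, iterating $A+f=A$ shows that $A$ is stable under the whole subgroup $\langle f\rangle$ acting by translation, so $A$ is a union of $\langle f\rangle$-orbits. Each orbit is a coset $x+\langle f\rangle$, which is precisely a translate of $\langle f\rangle$ and hence a regular polygon generated by $f$ in the sense of Thm.~\ref{polygon}. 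Taking the union of these cosets recovers $A$ as the required reunion of regular polygons.

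There is essentially no serious obstacle here: once the bookkeeping identification $V_A(f)=d \iff A+f=A$ is in place, the rest is orbit decomposition under a cyclic group action. The only point that merits a brief check is that the cosets produced in the second direction are indeed generated by the prescribed $f$ (not merely by some associate generator of the same subgroup), which is immediate because $f$ itself lies in $\langle f\rangle$ and cycles through all its elements.
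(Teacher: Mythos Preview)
Your proof is correct and follows essentially the same route as the paper's: both arguments recognize that $V_A(f)=d$ is equivalent to the invariance $A+f=A$, and then decompose $A$ into orbits under $x\mapsto x+f$, which are precisely the cosets of $\langle f\rangle$. Your phrasing in terms of cosets and group actions is slightly more explicit than the paper's, but the underlying idea is identical.
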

\begin{proof}
    Let us enumerate the starting points in $A$ of interval $f$, i.e. all $a\in A$
   such that $\exists b\in A, a+f=b$. These are all elements of $A$.
   This means that $A+f$ is equal to $A$, hence $a\mapsto a+f$
   is a permutation of the set $A$.
   
   The order of this map is exactly the order of $f$ in group $\zc$, i.e.
   $m=c/\gcd(c,f)$. The orbits of this map are $m-$polygons, which proves
   the reverse implication. The direct sense is obvious, since each regular polygon
   provides exactly $m$ times the interval $f$, as 
   $(a+k f) - a = f \iff (k-1)\, f = 0$ in \zc, which implies that $k-1$ is a multiple of $m$
   and hence $a+k\,f = a+f$, the only successor of $a$ in the orbit.
\end{proof}
There are further extensions of this. First notice that a scale featuring an interval $f$ with
multiplicity $d-1$, when $d$ divides $c$, is not necessarily generated by this interval, as this scale can be built up, for instance, of several full orbits of $x\mapsto x+f$, plus one chunk of another orbit. This
is quite different from the $\gcd(c,d)=1$ case of Thm. 1. 
There are\footnote{ In \cite{Amiot}, Online Supplementary 3, 
we have studied which scales in general achieve the greatest
value of their maximum Fourier coefficient. It turns out that they exhibit this kind of
geometrical shape.} numerous musical occurences of such scales, for instance $\{2, 5, 8, 9, 11\}$ and $\{1, 4, 7, 10, 11\}$ in fig. \ref{Liszt}. 
This adds significance to Thm. \ref{irrational}.

\dessin{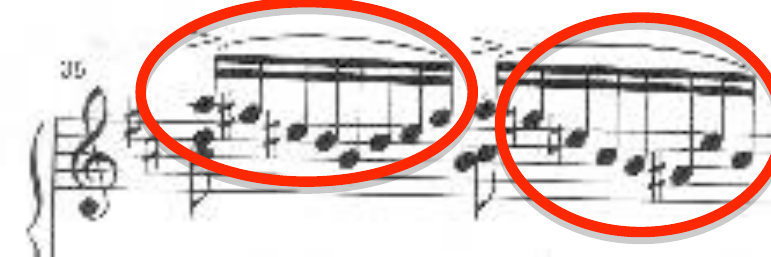}{8 cm}{Minor third with multiplicity 4 in 5 notes, in Liszt's sonata in B.}{Liszt}
Also,
\begin{Lemma}
   $A\subset\zc$, a scale with $d$ notes, is a regular polygon
    iff  $V_A(f) = d$ \textbf{for some divisor $\pmb f$ of $\pmb c$}.
\end{Lemma}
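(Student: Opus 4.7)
My plan is to derive both implications from the preceding lemma, which characterizes the condition $V_A(f) = d$ as $A$ being a disjoint union of orbits of the translation $x \mapsto x + f$. The role of the extra hypothesis that $f$ divides $c$ is to pin down the orbit size to $c/f$, thereby linking $d$, $f$, and $c$ in the way required for $A$ to be a single polygon.

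For the forward direction, I would unpack the definition: $A = \tau + H$ where $H \subset \zc$ is a subgroup of order $d$. The existence of such an $H$ forces $d \mid c$, and $H$ is then uniquely $\langle c/d\rangle$. Setting $f = c/d$, which is manifestly a divisor of $c$, I immediately obtain $A + f = \tau + H + f = \tau + H = A$, so the shift $x \mapsto x + f$ permutes $A$. Each of the $d$ pairs $(a, a+f)$ for $a \in A$ then contributes to the interval vector, yielding $V_A(f) = d$ as required.

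For the converse, I would assume $V_A(f) = d$ for some divisor $f$ of $c$ and apply the previous lemma to write $A$ as a disjoint union of orbits of $+f$. Because $f$ divides $c$, one has $\gcd(c, f) = f$, so each orbit has exactly $m = c/f$ elements; this forces $m \mid d$, and $A$ appears as a union of $k = d/m = df/c$ cosets of $\langle f\rangle$. The goal is to conclude $k = 1$, equivalently $m = d$, which in turn pins down $f = c/d$; once $k = 1$, $A$ is literally the single coset $\tau + \langle c/d \rangle$ and is therefore a regular polygon with $d$ vertices.

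The main obstacle is controlling $k$ in the converse: the hypotheses $f \mid c$ and $V_A(f) = d$ do not by themselves rule out $k \geq 2$, since any disjoint union of several cosets of $\langle f\rangle$ also realizes $V_A(f) = d$. I would handle this by reading the lemma as asserting the existence of a divisor $f$ of $c$ whose orbit size $c/f$ equals $d$, which forces $f = c/d$ and in particular $d \mid c$. With this reading, the previous lemma immediately delivers $A$ as the single orbit of $x \mapsto x + c/d$ through any basepoint $\tau \in A$, which is by definition the regular polygon $\tau + \langle c/d \rangle$; without it, the stated \textbf{iff} is slightly too generous and should be supplemented by the condition that $f$ is the \emph{minimal} divisor of $c$ making $V_A(f) = d$, namely $f = c/d$.
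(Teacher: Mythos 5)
You take the same route as the paper: both directions are channelled through the preceding lemma's decomposition of $A$ into orbits of $x\mapsto x+f$, and your forward direction (take $f=c/d$, observe $A+f=A$, count the $d$ pairs $(a,a+f)$) is exactly the paper's. Your diagnosis of the converse is also correct, and it is worth stating plainly: the paper's entire argument is the phrase ``same as above, but there is only one orbit,'' and nothing in the hypotheses $f\mid c$ and $V_A(f)=d$ forces a single orbit. As written, for arbitrary pc-sets the ``if'' direction is false: in $\Z_{12}$ the octatonic scale $A=\{0,1,3,4,6,7,9,10\}$ (which the paper itself invokes just before the first lemma) satisfies $V_A(3)=8=\Card A$ with $3\mid 12$, yet it is a union of two squares and cannot be a regular polygon, since $\Z_{12}$ has no subgroup of order $8$ at all. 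Your repair --- requiring the orbit size $c/f$ to equal $d$, i.e.\ $f=c/d$ --- is the right one: then $A$, having $d$ elements and being a disjoint union of orbits each of size $d$, is a single orbit $a+f\,\zc$, hence a regular polygon. (An alternative repair is to restrict $A$ to generated scales, which is the only situation the paper actually exploits when it applies this lemma to prove Theorem~2; there the invariance $A+f=A$ with $f\neq 0$ already forces $A$ to be a full coset.) So there is no gap on your side; the gap you identified is genuinely present in the source.
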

\begin{proof}
   Same as above, but there is only one orbit, i.e. a regular polygon, the orbit of some $a$,
 e.g.  $a +  f \zc$.
\end{proof}
In that case, a generator of the scale is the same thing as a generator of the difference group
$<A-A> = f \,\zc = \dfrac c m\zc$ \footnote{ 
This is the group generated by $A-A$, in all generality, cf. \cite{ToM}, 7.26.
It is illuminating to visualize this group as a kind of tangent space of $A$, akin to
the vector space associated with an affine structure.}.
This is the case pointed out by D. Clampitt\footnote{ Private communication.}.
As any subgroup of $\zc$ [with $d$ elements] must be cyclic, and has $\Phi(d)$ generators
(this being one definition of the totient function $\Phi$), this proves Thm. \ref{polygon}.

\subsection{Proof of Thm. \ref{notCoprime}}
The first case is obvious, when $c$ is even the only generator of $\{0, f=c/2\}$ is $c/2$. Case 3 was studied in Thm. \ref{polygon}.

There remains to be considered the case of a scale generated by some $f$ \textbf{not}
coprime with $c$, when that scale is not a regular polygon. 
For the end of this discussion, let $\gcd(f,c)=m>1$, and assume $d>1$ and
$0\in A$ (up to translation).

 We introduce $\fa(t)= \sum_{k\in A} e^{-2i\pi k t/c}$, the 
 Discrete Fourier Transform (DFT for short) of $A$. This is a secret weapon popularized in music theory by \cite{Quinn}, see \cite{Amiot} for details about the maths.
 
 When $A = \{f, 2f, \dots d\, f\}$ one gets from a simple trigonometric computation:
 \begin{Lemma}\label{trigo}
 $$|\fa(t)| = \begin{cases}\dfrac{|\sin (\pi\, d\, t\, f /c)|}{|\sin (\pi\, t\, f /c)|} & \text{or}\\
     \qquad d        & \text{ when }\sin (\pi t f /c)=0
    \end{cases} 
 $$
 Moreover, $|\fa(t)| \leq d$, and $|\fa(t)|=d \iff \sin (\pi t f /c)=0$.
\end{Lemma}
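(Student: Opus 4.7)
The plan is to recognize $\fa(t)$ as a finite geometric sum and handle the two cases (common ratio equal to $1$ or not) separately. Writing $\omega = e^{-2i\pi f t/c}$, the definition of $\fa$ specializes to
$$\fa(t) = \sum_{k=1}^{d} e^{-2i\pi k f t/c} = \sum_{k=1}^{d} \omega^k.$$
If $\omega = 1$, that is, $ft/c \in \Z$, equivalently $\sin(\pi t f/c) = 0$, each summand equals $1$ and the sum equals $d$, which also immediately gives $|\fa(t)| = d$.

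Otherwise, I would use the standard geometric series formula to obtain
$$\fa(t) = \omega \cdot \frac{1 - \omega^d}{1 - \omega},$$
and then take moduli, using the identity $|1 - e^{i\theta}| = 2|\sin(\theta/2)|$ applied to $\theta = -2\pi f t/c$ in the denominator and $\theta = -2\pi d f t/c$ in the numerator. The $|\omega| = 1$ factor drops out, the two factors of $2$ cancel, and we get exactly
$$|\fa(t)| = \frac{|\sin(\pi d f t/c)|}{|\sin(\pi f t/c)|}.$$

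For the inequality $|\fa(t)| \leq d$, I would invoke the triangle inequality, since $\fa(t)$ is a sum of $d$ complex numbers each of modulus $1$. The equality case of the triangle inequality requires all these unit complex numbers $\omega, \omega^2, \dots, \omega^d$ to be equal, which (for $d \geq 2$) forces $\omega = 1$, i.e. $\sin(\pi t f/c) = 0$. Conversely, this condition has already been shown to yield $|\fa(t)| = d$, so the characterization of equality is complete.

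There is no real obstacle here: the proof is a direct geometric-series calculation combined with the equality case of the triangle inequality. The only point requiring a little care is remembering to single out the degenerate case $\omega = 1$ before dividing by $1-\omega$, and being explicit that equality in the triangle inequality collapses to $\omega = 1$ rather than some weaker condition.
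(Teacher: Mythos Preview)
Your proof is correct and follows essentially the same route as the paper: compute the geometric sum, convert the quotient of $1-e^{i\theta}$ factors into a ratio of sines, and use the triangle (Minkowski) inequality for the bound and its equality case. The only cosmetic difference is that the paper also records the inductive estimate $|\sin(d\theta)|\le d\,|\sin\theta|$ to obtain the inequality, whereas you appeal to the triangle inequality directly; both arguments are standard and equivalent here.
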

\begin{proof}
   The formula is derived from Euler's $2i\sin\theta = e^{i\theta} - e^{-i\theta}$
   and the computation of
   $$(e^{i\theta} - e^{-i\theta})\sum_{k=1}^d e^{-2 k i\theta}
     = e^{-i\theta} - e^{-(2d+1)i\theta}
     = e^{-i\theta}e^{-d\;i\theta} (e^{d\;i\theta} - e^{-d\;i\theta}).
   $$
Setting $\theta = \pi t f/c$ yields the first result. When $\theta\in\pi\Z$, all the exponentials are equal to 1 in the definition of $\fa$ and hence $\fa(t) = 1+1+1+\dots = d$. Conversely, this equality can only occur (from Minkowski's inequality) when all the exponentials point to the same direction, which only happens for $\theta\in\pi\Z$, i.e. when $\sin (\pi t f /c)=0$.
  
The remaining inequality comes from
    $|\sin(2\theta)| = 2 |\cos\theta \sin\theta| < 2 |\sin\theta|$ and by easy induction,
  $$|\sin(d\theta)| < d |\sin\theta|$$ for integer $d\ge 2$ and $0<\theta<\pi$ with 
 again $\theta = \pi t f/c \mod \pi$.
\end{proof}
 If $g$ is another generator, one can also write $A = a + \{g, 2g, \dots d\, g\} $, and again
  $|\fa(t)| = \dfrac{|\sin (\pi d t g /c)|}{|\sin (\pi t g /c)|}$
  (because $|\fa|$ does not change when $A$ is translated). 
  Notice that the maximum value $d$ of this DFT must occur for several values of $t$ ($t\neq 0 \mod c$), since we assumed that $f$ and $c$ are not coprime.
  Hence, as this quantity cannot reach maximum value $d$ unless both $\sin (\pi t g /c)$ and $\sin (\pi t f /c)$  are nil, $t\,g/c$ and $t\,f/c$ must get simultaneously integer values. Say $t_0>0$ is the smallest integer satisfying this, then $m = c/t_0$ divides both $f$ and $g$. Hence by maximality of $m$, we get
\begin{Lemma}  \label{twoGen}
   If $f, g$ are two generators of a same scale $A$, then $m=\gcd(c,f) = \gcd(c, g)$.
\end{Lemma}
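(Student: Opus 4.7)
The plan is to pin down an intrinsic invariant of the set $A$ — one independent of any choice of generator — that nevertheless determines $\gcd(c,f)$. The natural candidate is the subgroup $\langle A-A\rangle\subseteq\zc$ generated by all pairwise differences of elements of $A$ (the object the authors have already alluded to in their footnote), which is manifestly a function of the pc-set $A$ alone.

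I would first compute $\langle A-A\rangle$ in terms of a given generator. Writing $A=a+\{0,f,2f,\dots,(d-1)f\}$, every difference is of the form $(k-j)f$ with $|k-j|\le d-1$, so
\[
A-A=\{kf\pmod c:|k|\le d-1\},
\]
which sits inside the cyclic subgroup $\langle f\rangle\subseteq\zc$. Conversely, since $d\ge 2$ we have $f=(a+f)-a\in A-A$, yielding $\langle f\rangle\subseteq\langle A-A\rangle$. Hence $\langle A-A\rangle=\langle f\rangle$, the unique cyclic subgroup of $\zc$ of order $c/\gcd(c,f)$.

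If $g$ is another generator of the same $A$, the identical computation gives $\langle A-A\rangle=\langle g\rangle$. Equality of these two cyclic subgroups of $\zc$ forces equal orders, so $\gcd(c,f)=\gcd(c,g)$, which is the desired common value $m$.

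There is no genuine obstacle; the only care needed is the hypothesis $d\ge 2$ that ensures $f$ itself lies in $A-A$ (the $d=1$ case is trivially consistent with the lemma). As an alternative, one can follow the DFT route outlined just before the lemma: by Lemma \ref{trigo}, the locus $\{t\in\zc:|\fa(t)|=d\}$ is exactly the subgroup of multiples of $c/\gcd(c,f)$, and since $|\fa|$ is translation-invariant this locus depends only on $A$, again pinning down $\gcd(c,f)$.
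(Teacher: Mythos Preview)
Your argument is correct. Your main route --- computing the difference group $\langle A-A\rangle$ and showing it equals $\langle f\rangle$, hence is an invariant of $A$ that determines $\gcd(c,f)$ --- is genuinely different from the paper's argument, which is precisely the DFT route you sketch at the end as an alternative. The paper shows that the set of $t$ where $|\fa(t)|$ attains its maximum value $d$ is $\{t:\sin(\pi t f/c)=0\}$, identifies the minimal positive such $t_0$ as $c/\gcd(c,f)$, and observes that since $|\fa|$ is translation-invariant this $t_0$ is the same whether one computes it from $f$ or from $g$. Amusingly, the paper remarks just after the lemma that ``this lemma can also be reached algebraically, but it is not altogether trivial'' --- your difference-group computation is exactly such an algebraic proof, and it is in fact rather clean. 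What the DFT approach buys the paper is continuity with the machinery already set up for Theorem~\ref{notCoprime}; what your approach buys is a self-contained, elementary argument requiring no analysis, only the fact that $\langle f\rangle\subset\zc$ has order $c/\gcd(c,f)$. Your caveat about $d\ge 2$ is the right one and is available from the ambient hypotheses of Theorem~\ref{notCoprime}.
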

NB: this lemma can also be reached algebraically, but it is not altogether trivial.

This proves also what we had advanced during the proof of Thm. 1, namely that generators of a same scale must have the same order. In particular, if one is invertible modulo $c$, then so is the other.\medskip

From there, one can divide $A$ by $m$ and assume without loss of generality that
$f'=f/m$ and $c'=c/m$ coprime. We are dealing now with a scale $A' = A/m$ in $\Z_{c'}$, generated by $f'$ and $g'=g/m$, both coprime with $c'$: then Thm. 1 gives two cases, either
$\Card A'=d < c'-1$ or  not. 
In the latter case, we have $\Phi(c')=\Phi(d+1)$ generators 
for an almost full, or full, aggregate; 
in the former, only two, like for the generic `major-like' scale. 

As for instance, $f' = \pm g' \pmod c' \iff f = \pm g \pmod c$, we have exhausted all possible cases when a generator is not coprime with $c$, and proved Thm. \ref{notCoprime}.

\subsection{Proof of Thm. \ref{Chopin}}
\begin{proof}
  We assume that scales $A, B$ are both generated, and that one is the complement of the other.
  
  First we study the more generic case when one generating interval is coprime with $c$; let for instance
  $A =\{ f, 2f, \dots d\,f\}$. Then $A'=\{(d+1)f, \dots c f=0\}$ is the complement  of $A$ in $\zc
  =\{f, 2f, \dots c f\}$ since the complete sequence of multiples of $f$ enumerates the whole of $\zc$.
  Hence $A'=B$, and $A, B$ share a generator: either $B-d\,f\subset A$, or the reverse.
  
  Lastly, let us assume that both $A$ and $B$ have no generator coprime with $c$. Then $A$
  (and $B$ likewise) is, up to translation, a subset of some subgroup of $\zc$, hence has at most $c/2$ elements. More precisely, if (say) $A=\{0,f,\dots (d-1)f\}$ then 
  $A$ is a subset of $m\zc$ where $m=\gcd(c,f)$, and $m\zc$ is a subgroup with $c' = c/m$ elements.
  Unless both cardinals are $c/2$, $A\cup B$ cannot be equal to the whole $\zc$.
  Thus the only remaining case whence $A$ can still be the complement of $B$ is when $c$ is even,
  and $A, B$ are complement halves (like for instance the two whole-tone scales $\{0, 2, 4, 6, 8, 10\}$ and $\{1, 3, 5, 7, 9, 11\}$),   which means that $\gcd(c,f)=2$, and 2 is a generator. 
  If so, then  $A=B+1$, which ends the proof.
\end{proof}

\subsection{Proof of Thms. \ref{rationalCase} and \ref{irrational}}
\begin{proof}
For Thm. 6, we note that a $d-$note scale produced by $J_\alpha$ does not change when $\alpha$ is augmented by a small enough quantity. Hence any $\alpha'>\alpha$ sufficiently close to $\alpha$ yields the same sequence. This can be seen on the following picture (fig. \ref{locConst}):

 \dessin{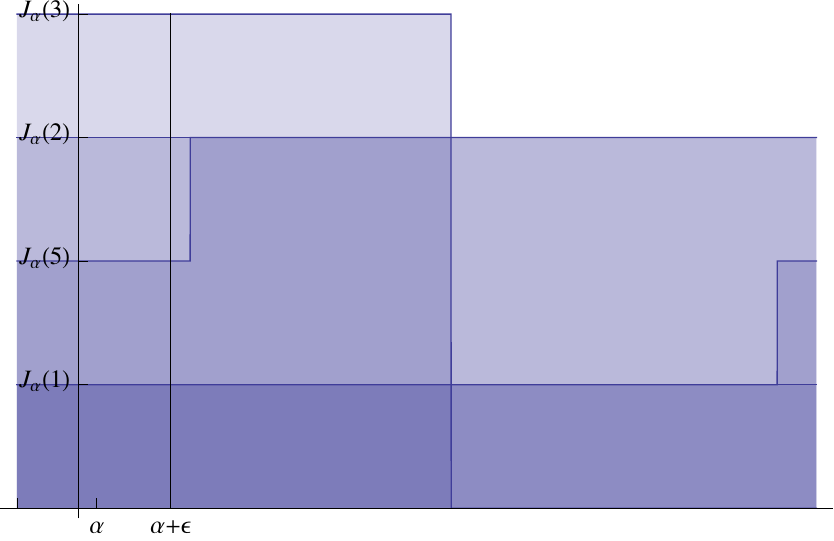}{9 cm}{The maps $\alpha\mapsto J_\alpha(k)$ are constant between $\alpha$ and $\alpha+\varepsilon$}{locConst}
 
\end{proof}

Now for the irrational generator and finite scale case (Thm. \ref{irrational}).
\begin{proof}
It follows the idea of the proof of Thm. 1, using the interval vector. This works because the affine map 
$\mc P_{x}$ is one to one again: since $x$ is irrational, we have the
\begin{Lemma}
  $\forall a,b\in\Z,\ a\, x \equiv b\,x\pmod 1 \iff a=b$.
\end{Lemma}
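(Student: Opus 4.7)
The plan is essentially to unfold the definition of irrationality; this lemma is a direct consequence with no real obstacle.

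The backward direction $a=b\implies ax\equiv bx\pmod 1$ is immediate. For the forward direction, I would rewrite $a\,x\equiv b\,x\pmod 1$ as $(a-b)x\in\Z$, i.e.\ there exists $n\in\Z$ with $(a-b)x=n$. Suppose for contradiction that $a\neq b$; then $a-b$ is a nonzero integer and we may divide to get
\[
   x=\frac{n}{a-b}\in\mathbb{Q},
\]
contradicting the assumption that $x$ is irrational. Hence $a=b$.

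The point of the lemma in the broader argument is that the translation map $t\mapsto t+x$ (or rather the multiplication-by-$k$ maps on the generated scale) is injective modulo $1$, so that the affine map $\mc P_x$ behaves just like the affine maps used in the proof of Theorem \ref{2genCase}. Since the reasoning is elementary and there is no combinatorial subtlety, the only ``step'' is to recognize that $(a-b)x\in\Z$ forces either $a=b$ or the rationality of $x$. There is no main obstacle to speak of: the whole argument fits in two lines.
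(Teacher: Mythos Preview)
Your proof is correct and is exactly the intended argument: the paper does not spell out a proof of this lemma at all, merely remarking that it holds ``since $x$ is irrational,'' and your two-line derivation from $(a-b)x\in\Z$ is the obvious way to fill that in. There is nothing to compare.
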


Consider now all possible intervals in $\mc P_x^d$, i.e. the $(i-j)x\pmod 1$ with $0\le i,j<d$.
By our hypothesis, these intervals occur with the same multiplicity in $\mc P_x^d$ and $\mc P_y^d$.
Let us have a closer look at these intervals (computed modulo 1), noticing first that

\begin{itemize}
  \item
  There are $d$ different intervals from 0 to $k\,x$, with $k=0$ to $d-1$. They are distinct
  because $x$ is irrational, as just mentioned. Their set is $\mc I_0 =(0,x,2x\dots (d-1)x)$.
  \item
  From $x$ to $x, 2x, 3x, \dots (d-1)x$ \emph{and 0}, there are $d-1$ intervals common
  with $I_0$, and a new one, $0-x=-x$. It is new because $x$ is still irrational. For the record,
  their set is $\mc I_x =(0,x,2x\dots (d-2)x, \pmb {-x})$.
  \item 
  From $2x$ to the others, $d-1$ intervals are common with $\mc I_x$ and only 
  $d-2$ are common with the $\mc I_0$.
  \item
  Similarly for $3x, 4x\dots$ until
  \item
  Finally, we compute the intervals from $(d-1)x$ to $0, x,  \dots,  (d-2)x, (d-1)x$.
  One gets  $\mc I_{(d-1)x} =(0, -x, -2x\dots -(d-1)x)$.  
\end{itemize}
The following table, not unrelated to fig. \ref{iv}, will make clear the 
values and coincidences of the different possible intervals (see fig. \ref{tabInt}):

 \dessin{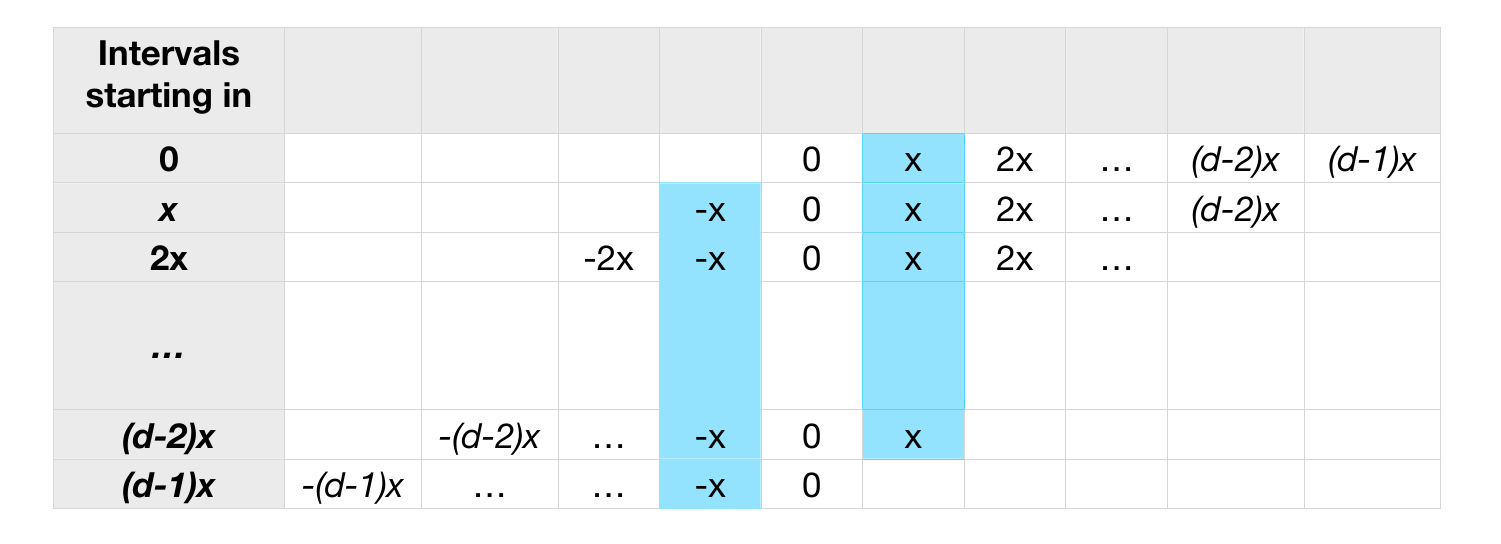}{16 cm}{The different intervals from each starting point}{tabInt}
 
So only two intervals (barring 0) occur $d-1$ times in $\mc P_x^d$ (resp. $\mc P_y^d$),
namely $x$ and $-x$. Hence $x=\pm y$, qed.
\end{proof}

\subsection{Proof of Thm. \ref{infinite}}
Notice that $\mc P_x^\infty = \{ k x\pmod1, k\in\Z\}$ is a subgroup of the circle (or one-dimensional torus) $\R/\Z$, quotient group of the subgroup of $\R$ generated by 1 and $x$. All computations are to be understood modulo 1.
If we consider a translated version $\mc S = a + \mc P_x^\infty$, then the group can be retrieved by a simple difference:
$$
   \mc P_x^\infty = \mc S -\mc S = \{ s-s', (s, s')\in\mc S^2\}
$$
So the statement of the theorem can be simplified, without loss of generality, as 
``if $\mc P_x^\infty =\mc P_y^\infty $ then $x=\pm y\pmod 1$'' (and similarly in the finite case).
\begin{proof}
We must distinguish the two cases, whether $x$ is rational or not.
\begin{itemize}
  \item
  The case $x$ rational is characterized by the finitude of the scale. Namely, when $x = a/b$ with $a, b$ coprime integers (we will assume $b>0$), then $\mc P_x^\infty$ is the group generated by $1/b$: one inclusion is clear, the other one stems from Bezout relation: there exists some combination $a u + b v = 1$ with $u,v$ integers, and hence
  $$1/b = u \, a/b + v = u\, a/b  \pmod 1= \underbrace{a/b+\dots a/b}_{u \text{ times}}$$ is an element of $\mc P_x^\infty $. As $1/b\in \mc P_x^\infty$, it contains the subgroup generated by $1/b$, and finally these two subgroups are equal.
  
  The subgroup $<1/b>\pmod 1$ is cyclic with $b$ elements, hence it has $\Phi(b)$ elements, which concludes this case of the theorem.\footnote{ The different generators are the $k/b$ where $0<k<b$ is coprime with $b$.}
  \item
  Now assume $x$ irrational and $\mc P_x^\infty = \mc P_y^\infty$. 
  An element of $\mc P_x^\infty$ can be written as $a x \pmod 1$, with $a\in\Z$. Since $y\in\mc P_x^\infty$ then $y = a x\pmod 1$ for some $a$. Similarly, $x = b y$ for some $b$. Hence
  $$
    x = a b x \pmod 1 \quad\text{ that is to say in $\Z$, }\quad
    x = a b x + c
  $$
  where $a,b,c$ are integers.
  This is where we use the irrationality of $x$: $(1 - a b) x = c$ implies that $ab=1$ and $c=0$.
  
  Hence $a = \pm 1$, i.e. $x=\pm y \pmod 1$, which proves the last case of the theorem.
\end{itemize}
\end{proof}
\begin{Rem}
   The argument about retrieving the group from the (possibly translated) scale applies also if the scale is just a semi-group, e.g. $\mc P_x^{+\infty} = \{ k x\pmod1, k\in\Z, k\ge 0\}$, which is perhaps a less wild generalization of the usual pythagorean scale. So the theorem still holds for the half-infinite scales.
\end{Rem}
\section*{Conclusion}
Apart from the seminal case of Major Scale-like generated scales, it appears that
many scales can be generated in more than two ways. This is also true for
more complicated modes of `generation'. 

Other simple' sequences appear to share many different generation modes. It is true for instance of geometric sequences, like the \textsl{powers} of 3, 11, 19 or 27 modulo 32 which
generate the same 8-note scale in $\Z_{32}$, namely 
$\{1, 3, 9, 11, 17, 19, 25, 27\}$ -- geometric progressions being interestingly dissimilar 
in that respect from arithmetic progressions.\footnote{ Such geometric sequences
occur in Auto-Similar Melodies \cite{SSM}, like the famous initial motive in Beethoven's
Fifth Symphony, autosimilar under ratio 3.}

I hope the above discussion will shed some light on the mechanics of scale construction.
I thank David Clampitt for fruitful discussions on the subject, 
and Ian Quinn whose ground-breaking work edged me on to explore the subject in depth, and the readers whose fruitful comments helped me to hone both my english and the readability of this paper.

\end{document}